\documentclass[reqno]{amsart}
\usepackage[bb=dsserif]{mathalpha}
\usepackage{hyperref}
\usepackage{float}
\usepackage{amsmath}
\usepackage{graphicx} 
\usepackage{latexsym}
\usepackage{amsfonts}
\usepackage{amssymb}
\usepackage{color} 
\setcounter{MaxMatrixCols}{10}

\theoremstyle{plain}
\newtheorem{theorem}{Theorem}
\newtheorem{corollary}[theorem]{Corollary}
\newtheorem{lemma}[theorem]{Lemma}

\theoremstyle{definition}

\newtheorem{remark}[theorem]{Remark}
\newtheorem*{remark*}{Remark}

\newcommand{\pr}{\mathbb{P}}
\newcommand{\R}{\mathbb{R}}

\newcommand{\lf}{\lfloor}
\newcommand{\rf}{\rfloor}
\newcommand{\E}{\mathbb{E}}
\newcommand{\ph}{\varphi}
\newcommand{\Z}{\mathbb{Z}}

\title[Berry-Esseen for walks conditioned to stay positive]{Berry-Esseen inequality for random walks conditioned to stay positive}
\date{}
\thanks{Alexander Tarasov and Vitali Wachtel were supported by the DFG}

\author[Denisov]{Denis Denisov}
\address{Department of Mathematics, University of Manchester, UK}
\email{denis.denisov@manchester.ac.uk}

\author[Tarasov]{Alexander Tarasov}
\address{Faculty of Mathematics, Bielefeld University, Germany}
\email{atarasov@math.uni-bielefeld.de}

\author[Wachtel]{Vitali Wachtel}
\address{Faculty of Mathematics, Bielefeld University, Germany}
\email{wachtel@math.uni-bielefeld.de}

\keywords{Random walk, exit time, Rayleigh distribution, Berry-Esseen inequality}
    \subjclass{Primary 60G50; Secondary 60G40, 60F17}
\begin{document}

\begin{abstract}
    We consider random walks conditioned to stay positive. 
    When the mean of increments is zero and variance is finite 
    it is known that they converge to the Rayleigh distribution. In the present paper we derive a Berry-Esseen type estimate and show that the rate of convergence is of order $n^{-1/2}$.  
\end{abstract}

\maketitle
\section{Introduction}
Let  $\{X_k\}$ be a sequence of  independent random variables with zero mean 
$\E X_1=0$ and finite variance $\E X_1^2=:\sigma^2\in(0,\infty)$. 
Consider a random walk 
 $\{S_n; n\ge0\}$ defined as follows, 
 $S_0=0$ and 
\begin{align*}
    S_n:=X_1+X_2+\ldots+X_n,\ n\ge1.
\end{align*}
Under the assumption $\E |X_1|^3 < \infty$  the well-known Berry-Esseen inequality says  
\begin{equation}\label{eq.berry-esseen.classical}
    \left|\pr\left(\frac{S_n}{\sigma \sqrt{n}} \le x\right) - \Phi(x)\right|
\le
    \gamma_0\frac{\E |X_1|^3}{\sigma ^3\sqrt{n}},
\end{equation}
where $\Phi$ stands for the standard normal distribution function and  one can take $\gamma_0=0.4785$. 
Originally the inequality was proved in \cite{B41} and we refer to  \cite{T10} for the provided inequality with exact constant.
The Berry-Esseen's type of inequalities quantifies the error in the central limit theorem and has many related embellishments such as assuming independent, but not identically distributed summands, or allowing a specified dependence structure. 

In this paper we consider random walks conditioned to stay positive. A functional limit theorem showing convergence of these random walks to the Brownian meander was proved in~\cite{Iglehart74} under an extra assumption of the existence of the third moment of increments. An elegant and short proof which did not require this assumption  was later given in~\cite{Bolthausen76}. These results imply in particular that properly scaled conditioned random walk converges to the Rayleigh distribution, which is the marginal distribution of the Brownian meander at time $1$.

In the present paper we are concerned with the rate of convergence to the Rayleigh distribution. 
The main result of the paper is the Berry-Esseen inequality for random walks conditioned to stay positive. Namely, denoting by $\tau$ the first time when the walk $\{S_n\}$ is non-positive, that is,
\begin{align*}
    \tau:=\inf\{n\ge1:S_n\le0\},
\end{align*}
we prove the following theorem.
\begin{theorem}
\label{thm:BE}
Assume $\E X_1 = 0, \E |X_1|^2 = \sigma^2$ and $\E |X_1|^3 < \infty$. 
Then there exists an absolute constant $A$ such that
\begin{align}
\label{eq:BE1}
    \bigg|
        \pr\left(\frac{S_n}{\sigma\sqrt{n}} \ge x\mid  \tau>n\right) - e^{-x^2/2}
    \bigg|
\le
    A
    \frac{(\E |X_1|^3)^3}{\sigma^9\sqrt{n}}.
\end{align}
Furthermore,
\begin{align}
\label{eq:BE2}
    \bigg|
        \frac{\pr(\tau>n)}
        {\sqrt{\frac{2}{\pi}}
        |\E S_\tau|n^{-1/2}}
    -
        1
    \bigg|
\le
    A \frac{(\E |X_1|^3)^3}{\sigma^9 \sqrt{n}}.
\end{align}
\end{theorem}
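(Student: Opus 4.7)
\emph{Plan.} I would derive both estimates from a single quantitative local limit theorem for $\pr(\tau > n, S_n \in dy)$: \eqref{eq:BE1} follows by integration in $y$ and division by $\pr(\tau > n)$, while \eqref{eq:BE2} follows by integrating out $y$. After rescaling we may assume $\sigma = 1$ and write $\beta_3 := \E|X_1|^3$. Let $V$ denote the harmonic function for the walk killed on exit from $(0,\infty)$, normalized so that $V(0) = |\E S_\tau|$; the classical leading order $\pr(\tau > n) \sim V(0)\sqrt{2/(\pi n)}$ is known.

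\emph{Strategy for \eqref{eq:BE2}.} The cleanest route proceeds via Sparre Andersen's generating-function identity
\begin{equation*}
\sum_{n \ge 0} s^n \pr(\tau > n) = \exp\Bigl(\sum_{n \ge 1}\tfrac{s^n}{n}\pr(S_n > 0)\Bigr),
\end{equation*}
combined with \eqref{eq.berry-esseen.classical} evaluated at $x = 0$, which yields $|\pr(S_n > 0) - \tfrac12| \le \gamma_0\beta_3/\sqrt n$. Decomposing the exponent as $-\tfrac12\log(1-s) + R(s)$, the residual $R$ is H\"older-$1/2$ at $s = 1$ with constant proportional to $\beta_3$, so Flajolet--Odlyzko-type singularity analysis yields an absolute error bound of the form $|\pr(\tau > n) - V(0)\sqrt{2/(\pi n)}| = O(\beta_3/n)$; dividing by the leading term produces the relative error \eqref{eq:BE2}, with the cubic power of $\beta_3$ absorbing a lower bound on $V(0)$ in terms of $\sigma$ and $\beta_3$ (which can be obtained from Wald's identities).

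\emph{Strategy for \eqref{eq:BE1}.} Decompose trajectories at the midpoint $m := \lfloor n/2 \rfloor$:
\begin{equation*}
\pr(S_n \ge y, \tau > n) = \int_0^\infty \pr(S_m \in dz,\, \tau > m)\,\pr(\tau_z > n - m,\, S_{n-m} \ge y - z),
\end{equation*}
where $\tau_z := \inf\{k \ge 1 : z + S_k \le 0\}$. For the first factor I would use a Berry--Esseen-strengthened meander local CLT of the shape $\pr(S_m \in dz, \tau > m) = V(0) m^{-1}(z/\sqrt m) e^{-z^2/(2m)}\,dz\,(1 + O(\beta_3/\sqrt m))$ uniformly in $z$. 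For the second factor, $z$ is typically of order $\sqrt n$, so $\pr(\tau_z > n - m)$ is bounded away from $0$ and $1$, and both the joint and marginal probabilities can be approximated by Gaussian expressions through iterated applications of \eqref{eq.berry-esseen.classical} to the terminal increment and to the simple boundary crossing. Convolving the two approximations, integrating over $z$, and dividing by $\pr(\tau > n)$ from the previous step produces \eqref{eq:BE1}.

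\emph{Main obstacle.} The core difficulty is the quantitative meander local CLT underlying the first factor in the midpoint decomposition. Existing proofs of the corresponding asymptotic (Bolthausen, and refinements in the Denisov--Wachtel line) proceed through a Wiener--Hopf factorization of characteristic functions and deliver only asymptotic statements; converting them into inequalities with sharp $\beta_3$-dependence requires careful Fourier-analytic bookkeeping near the singularity at $s = 1$, together with a separate treatment of the boundary zone $z = O(1)$ where $V(z)$ departs substantially from its linear asymptote $V(z) \sim c\,z$. The cubic power of $\beta_3$ in the final constants appears natural from chaining three Berry--Esseen-type estimates, one for the denominator $\pr(\tau > n)$, one for the conditioned endpoint distribution, and one for the unconditioned terminal increment, each contributing a factor $\beta_3/\sigma^3$.
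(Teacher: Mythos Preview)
Your plan is genuinely different from the paper's route, but it has a structural gap. The ``Berry--Esseen-strengthened meander local CLT'' you invoke for the first factor in the midpoint decomposition,
\[
\pr(S_m\in dz,\tau>m)=V(0)\,m^{-1}\frac{z}{\sqrt m}\,e^{-z^2/(2m)}\,dz\,\bigl(1+O(\beta_3/\sqrt m)\bigr),
\]
is \emph{at least as strong} as the theorem you are trying to prove: integrating it in $z$ already yields both \eqref{eq:BE1} and \eqref{eq:BE2} directly, so the midpoint split and the second-factor analysis become redundant. You acknowledge this as the ``main obstacle,'' but no mechanism is offered for proving it; the Bolthausen/Denisov--Wachtel arguments you cite give asymptotics, not rates, and upgrading them is precisely the problem. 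The paper avoids this circularity by a completely different device: a generalised reflection identity
\[
\pr(S_n\ge x,\tau>n)=\pr(S_n\ge x)-\pr(S_n\le -x)+\pr(S_n\le -x,\tau=n)
-\sum_{k=1}^{n-1}\int_0^\infty\pr(S_k\in -dy,\tau=k)\bigl[\pr(S_{n-k}\ge x+y)-\pr(S_{n-k}\le -x+y)\bigr],
\]
which expresses the conditioned tail purely through \emph{unconditioned} quantities; after smoothing $S_n$ by an independent $U$ with compactly supported characteristic function, each piece is controlled by the ordinary Berry--Esseen bound, a concentration estimate, and a Fourier estimate on $Q_n(x):=\pr(S_n+U\ge x)-\pr(S_n+U\le -x)$. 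The cubic $\beta_3^3$ there arises not from chaining three Berry--Esseen errors but from the $\E|X_1|^3$ appearing additively in the concentration bound $\pr(S_n\in[x,x+y])\le(\sqrt 2\,\E|X_1|^3+y\pi^{-1/2})/\sqrt{2n}$.

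Your Sparre Andersen route to \eqref{eq:BE2} is also not justified as written. Flajolet--Odlyzko transfer requires analytic continuation of the generating function into a $\Delta$-domain beyond the unit disc; here you only know $|a_n|:=|\pr(S_n>0)-\tfrac12|\le\gamma_0\beta_3/\sqrt n$, which gives information on the real segment $s\in[0,1)$ but no analyticity across $|s|=1$. One can sometimes substitute elementary Tauberian or coefficient-convolution arguments, but the step from ``$R$ is H\"older-$\tfrac12$ at $s=1$'' to ``coefficients of $(1-s)^{-1/2}e^{R(s)}$ have error $O(n^{-1})$'' is exactly where the difficulty lies and needs a proof. The paper sidesteps this entirely: it obtains \eqref{eq:BE2} simply by setting $x=0$ in the uniform estimate for $\pr(S_n\ge x,\tau>n)$.
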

This theorem improves a recent result by Grama and Xiao~\cite{GramaXiao24+} 
for conditioned random walks started at $0$. They have shown that if $\E|X|^{2+\delta}<\infty$ for some $\delta\in(0,1]$ then there exists $\rho=\rho(\delta)$ such that 
\begin{equation}
\label{eq:GX}
\bigg|
        \pr(S_n \ge \sigma x\sqrt{n}| \tau>n) - e^{-x^2/2}
    \bigg|=O(n^{-\rho}).
\end{equation}

In the present paper we assume that $\E|X_1|^3<\infty$ which corresponds to $\delta=1$ in Grama and Xiao~\cite{GramaXiao24+}. 
However, one can modify the arguments 
to show that under the assumption 
$\E||X_1|^{2+\delta}<\infty$ the 
rate of convergence in 
Theorem~\ref{thm:BE} will be $n^{-\delta/2}$ when $\delta\in (0,1].$ 
(In the case $\delta<1$ one gets actually the rate $o(n^{-\delta/2})$.
This follows from the corresponding estimates for the concentration function of $S_n$, see Theorem~III.6 in \cite{Petrov}.)  A further improvement, in comparison to \cite{GramaXiao24+}, is a possibility to compute the rate of convergence in \eqref{eq:BE1} and in \eqref{eq:BE2}. 
In the course of the proof we give a rather crude upper bound on the constant $A$; it was not our purpose to obtain a good numerical constant, we wanted only clarify the dependence of the rate on the distribution of summands.
As in the classical Berry-Esseen inequality \eqref{eq.berry-esseen.classical}, the rate of convergence depends on the distribution of increments via the Lyapunov ratio $\sigma^{-3}\E|X_1|^3$
only. But, in contrast to \eqref{eq.berry-esseen.classical}, we get the third power of the Lyapunov ratio. This is caused by our approach, and it is possible that this part is not optimal. 

It is also worth noting that under some further assumptions it is also possible to obtain asymptotic expansions for conditioned random walks. In this case, similarly to the unconditioned random walks, we need to assume existence of higher moments and  some assumptions about the structure of increments. 
This problem has been considered in~\cite{DTW23, DTW25a}.

Let $V(x)$ denote the renewal function corresponding to weak descending ladder epochs. It is well known that this function is harmonic for $\{S_n\}$ killed at leaving $(0,\infty)$, that is,
$$
V(x)=\E[V(x+X_1);x+X_1>0], \quad x>0.
$$
This property allows one to perform the Doob $h$-transform and to define a random walk conditioned to stay positive at {\it all} times. This process is a Markov chain with the following transition kernel:
$$
P^{(h)}(x,dy)=\frac{V(y)}{V(x)}\pr(x+X_1\in dy),\quad x,y>0.
$$
Let $\pr^{(h)}$ denote the corresponding probability measure.
Bryn-Jones and Doney~\cite{BJD06} proved a functional limit theorem for $S_n$ under $\pr^{(h)}$. Their result implies that 
$$
\lim_{n\to\infty}
\pr^{(h)}\left(\frac{S_n}{\sigma\sqrt{n}}\le x\right)
=\int_0^x\sqrt{\frac{2}{\pi}}y^2e^{-y^2/2}dy,\quad x>0.
$$
Using \eqref{eq:GX}, Hong and Sun~\cite{HongSun24} have shown that 
$$
\sup_{x\ge0}
\left|\pr^{(h)}\left(\frac{S_n}{\sigma\sqrt{n}}\le x\right)
-
\int_0^x\sqrt{\frac{2}{\pi}}y^2e^{-y^2/2}dy\right|
\le C\frac{\log n}{n^\rho}.
$$
Repeating their proof and using \eqref{eq:BE1} instead of \eqref{eq:GX}, one can get 
\begin{equation}
\label{eq:HS1}
\sup_{x\ge0}
\left|\pr^{(h)}\left(\frac{S_n}{\sigma\sqrt{n}}\le x\right)
-
\int_0^x\sqrt{\frac{2}{\pi}}y^2e^{-y^2/2}dy\right|
\le C\frac{(\E|X_1|^3)^3}{\sigma^9}\frac{\log n}{\sqrt{n}}
\end{equation}
provided that $\E|X_1|^3$ is finite. We believe that the logarithm can be removed in this inequality. To achieve that improvement, the optimal uniform rate obtained in Theorem~\ref{thm:BE} is not sufficient. One needs a non-uniform rate of convergence, which takes into account lower and large deviations. Some results in that direction with non-optimal exponents have been obtained in \cite{GramaXiao24+}. 

Typically, to obtain a rate of convergence in a limit theorem one derives first a rate of convergence for the corresponding characteristic functions and applies then an appropriate version of the inversion formula for Fourier transforms. We use a completely different approach based on a representation for $\pr(S_n\ge x,\tau>n)$ which  can be seen as a generalisation of the classical reflection principle for simple random walks.

Assume for a moment that $S_n$ is a simple random walk, that is, $\pr(X_1=1)=\pr(X_1=-1)=\frac{1}{2}$.  Then one has $S_{\tau} = - {\mathbb 1}_{\tau=1}$. This implies that for integer  $x\ge 1$, 
\begin{align*}
    \pr(S_n \ge x, \tau > n) 
=&
    \pr(S_n \ge x) 
-
    \pr(S_n \ge x, \tau < n)\\
=&
    \pr(S_n \ge x) 
-
    \pr(S_n \ge x, \tau = 1)
-
    \pr(S_n \ge x, 2 \le \tau < n)\\
=&
    \pr(S_n \ge x) 
-
    \frac{1}{2}\pr(S_{n-1} \ge x + 1)
-
    \sum_{k=2}^{n-1}
        \pr(\tau = k) \pr(S_{n-k} \ge x).
\end{align*}
We will now apply the reflection  principle. Similarly to the previous equation:
\begin{align*}
    0 
=
    \pr(S_n \le -x, \tau >n) 
=
    \pr(S_n \le -x) 
-&
    \frac{1}{2}\pr(S_{n-1} < -x+1)\\ 
-&
    \sum_{k=2}^{n-1} 
        \pr(\tau = k) \pr(S_{n-k} \le - x).
\end{align*}
Taking the difference and using symmetry we obtain
\begin{align*}
    \pr(S_n \ge x,\tau > n) \
=&
    \frac{1}{2} \pr(S_{n-1} \ge x-1)
-
    \frac{1}{2} \pr(S_{n-1} \ge x+1)\\
=&
    \frac{1}{2}
    \pr\big(S_{n-1} \in \{x-1, x\}\big).
\end{align*}
Plugging in $x=1$ we obtain also 
\[
\pr(\tau>n) = \frac{1}{2}
    \pr\big(S_{n-1} \in \{0, 1\}\big).
\]
Hence, 
\begin{align*}
&\pr(S_n \ge x\mid \tau > n)\\
&\hspace{1cm}
=\frac{\pr\big(S_{n-1} \in \{x-1, x\})}{\pr\big(S_{n-1} \in \{0, 1\}\big)}\\
&\hspace{1cm}
=\frac{1}{\pr(S_{n-1}=(n-1)\text{ mod 2})}
\begin{cases}
\pr(S_{n-1}=x-1),& n-x \text{ is even }\\ 
\pr(S_{n-1}=x)
& n-x \text{ is odd }
\end{cases}
.
\end{align*}
This equality connects the tail of the conditioned walk to the local probabilities of the original (unconditional) process and explains the appearance of the standard normal density $e^{-x^2/2}$ as the limiting tail for conditioned walks. Furthermore, this equality allows one to obtain the rate of convergence for conditioned walks from the corresponding results for local probabilities of unconditioned walks.

In the general case we also start with the equalities
\begin{align*}
\pr(S_n\ge x,\tau>n)&=\pr(S_n\ge x)-\pr(S_n\ge x,\tau<n),\\
0&=\pr(S_n\le -x)-\pr(S_n\le -x,\tau\le n).
\end{align*}
Taking the difference and using the Markov property, we get 
\begin{align*}
&\pr(S_n\ge x,\tau>n)\\
&\hspace{0.5cm}=\pr(S_n\ge x)-\pr(S_n\le -x)+\pr(S_n\le-x,\tau=n)\\
&\hspace{1cm}-\sum_{k=1}^{n-1}\int_0^\infty\pr(S_k\in-dy,\tau=k)
[\pr(S_{n-k}\ge x-y)-\pr(S_{n-k}\le-x+y)].
\end{align*}
This representation is the starting point in our approach. 

In the next section we first prove a simplified (numerical constants will be replaced by $O(1)$) version of Theorem~\ref{thm:BE} for symmetric lattice walks. Under these additional assumptions we can avoid some technical difficulties and give deeper insights into our method. It is worth mentioning that we do not use Fourier methods in this special case.  
Then we turn to the proof in the general case. Here we apply smoothing with an appropriately chosen random variable $U$. The main technical step in the general case consists in estimating
$$
\pr(S_n+U\ge x)-\pr(S_n+U\le -x)
-\pr(S_{n+1}+U\ge x)+\pr(S_{n+1}+U\le -x).
$$
We show that this function is uniformly bounded by $Cn^{-3/2}$. To do so we use the standard approach via Fourier transforms.
\section{Proof of the main theorem}
In this section we shall assume, without loss of generality, that 
$$
\sigma^2=1.
$$
\subsection{Proof in the case of symmetric lattice distributions.}
To illustrate our approach, we consider first the case when the distribution of $X_1$ is symmetric and lattice.
Furthermore, we assume that the distribution of $X_1$ is aperiodic, that is, $\mathbb{Z}$ is the minimal lattice for $X_1$. To simplify the calculations, we will not take any care of constants. So, all the constants appearing in this subsection may depend on the whole distribution of $X_1$.

We first notice that
\begin{align*}
    \pr(S_n \ge x, \tau >n ) 
=
    \pr( S_n \ge x) 
-
    \sum_{k=1}^{n-1}
    \sum_{y=0}^\infty
        \pr(S_k = -y, \tau = k) \pr(S_{n-k} \ge x+y).
\end{align*}
Using the symmetry of the distribution of $S_k$, we also have
\begin{align*}
    0
=&
    \pr( S_n \le -x) 
-
    \sum_{k=1}^{n-1}
    \sum_{y=0}^\infty
        \pr(S_k = -y, \tau = k) \pr(S_{n-k} \le y-x)
-
    \pr(S_n \le -x, \tau = n)\\
=&
    \pr( S_n \ge x) 
-
    \sum_{k=1}^{n-1}
    \sum_{y=0}^\infty
        \pr(S_k = -y, \tau = k) \pr(S_{n-k} \ge x-y)
-
    \pr(S_n \le -x, \tau = n).
\end{align*}
Taking the difference, we obtain
\begin{align}
\label{eq:symm-repr}
    \pr(S_n \ge x, \tau >n ) 
&=
    \sum_{k=1}^{n-1}
    \sum_{y=0}^\infty
        \pr(S_k = -y, \tau = k) \pr\big(S_{n-k} \in [x-y, x+y)\, \big)\\
&\hspace{1cm}+ \nonumber
    \pr(S_n \le -x, \tau = n).
\end{align}
Clearly,
\begin{align}
\label{eq:tau=n}
    \pr(S_n \le -x, \tau = n)
\le
    \pr(\tau = n) \le \frac{C}{n^{3/2}},
\end{align}
where the last inequality follows from Theorem~8 in \cite{WV09}.

To deal with the sum on the right-hand side of \eqref{eq:symm-repr} we need a good approximation for $\pr\big(S_{n-k} \in [x-y, x+y)\big)$. By the standard bound for concentration functions of sums of i.i.d. variables, 
\begin{align*}
    \pr\big(S_{n-k} \in [x-y, x+y)\big) \le \frac{Cy}{\sqrt{n-k}}.
\end{align*}

This implies that
\begin{multline} \label{eq:smallhalfsum}
    \sum_{k=\lf n/2 \rf+1}^{n-1} 
    \sum_{y=0}^\infty
        \pr(S_k = -y, \tau = k) \pr\big(S_{n-k} \in [x-y,x+y)\big)
\\
\le
    C
    \sum_{k=\lf n/2 \rf+1}^{n-1}
        \frac{1}{\sqrt{n-k}}
        \sum_{y=0}^\infty
            y\pr(S_k = -y, \tau = k).
\end{multline}
By Lemma 20 in \cite{WV09},
$$
\pr(S_{k-1}=z, \tau > k-1) \le C\frac{z}{k^{3/2}}.
$$
Hence
\begin{align*}
    \sum_{y=0}^\infty
        y\pr(S_k = -y, \tau = k)
=&
    \sum_{y=0}^\infty
        y
        \sum_{z=1}^{\infty}
            \pr(S_{k-1} = z, \tau > k-1) \pr(-X = z+y)\\
\le& \nonumber
    \frac{C}{k^{3/2}}
    \sum_{y=0}^{\infty}
    \sum_{z=1}^{\infty}
            yz \pr(-X -z = y).
\end{align*}
Notice further that
\begin{align*}
    \sum_{y=0}^{\infty}
    \sum_{z=1}^{\infty}
            yz \pr(-X = z+y)
&\le
    \frac 14\sum_{y=0}^{\infty}
    \sum_{z=1}^{\infty}
            (z+y)^2 \pr(-X =z+ y)\\
&\le\nonumber
    \frac 14 \sum_{u=0}^\infty
        u^3 \pr(-X = u)
\le 
    \E \big[ -X^3; X< 0 \big].
\end{align*}
Combining the last two inequalities, we have
\begin{align} \label{eq:expectationStau}
    \sum_{y=0}^\infty
        y\pr(S_k = -y, \tau = k)
=
    \E [-S_\tau; \tau=k] 
\le
    \frac{C}{k^{3/2}} \E \big[ -X^3; X< 0 \big].
\end{align}
Substituting \eqref{eq:expectationStau} into \eqref{eq:smallhalfsum}, one gets
\begin{align}
\label{eq:large_k}
\nonumber
    \sum_{k=\lf n/2 \rf+1}^{n-1} 
    \sum_{y=0}^\infty
        \pr(S_k = -y, \tau = k) &\pr\big(S_{n-k} \in [x-y,x+y)\big)
\\
&\le
    C
    \sum_{k=\lf n/2 \rf+1}^{n-1}
        \frac{1}{\sqrt{n-k}}
        \frac{1}{k^{3/2}}
\le
    \frac{C}{n}.
\end{align}
Thus, it remains to consider $k\le \lf n/2\rf$. 
According to Theorem 4 in \cite{GJP1984}, for every $z \in [x-y, x+y)$,
\begin{align*}
\big|
    \pr(S_{n-k} =z) - \pr(S_{n-k} = x)         
\big|
\le
    \frac{Cy}{n-k}.
\end{align*}
This implies that
\begin{align*}
\big|
    \pr\big( S_{n-k} \in [x-y, x+y) \big) - 2y \pr(S_{n-k} = x)
\big|
\le
    \frac{Cy^2}{n-k}
\end{align*}
and, consequently,
\begin{align}
\label{eq:small_k.1}
\nonumber
\Bigg|
    \sum_{k=1}^{\lf n/2 \rf}
    \sum_{y=1}^\infty
        \pr(S_k = -y, \tau = k)
        &
        \pr\big(S_{n-k} \in [x-y, x+y) \big)\\
        \nonumber
&-
    2 
    \sum_{k=1}^{\lf n/2 \rf}
        \pr(S_{n-k} = x)
        \sum_{y=1}^\infty
        y\pr(S_k = -y, \tau = k)
\Bigg|\\
\le 
    \frac{C}{n} 
    \sum_{k=1}^{\lf n/2 \rf}
    &
    \sum_{y=1}^\infty
        y^2 \pr( S_k = -y, \tau = k)
\le
    \frac{C}{n} \E [S^2_\tau] 
\le 
    \frac{C}{n}.
\end{align}
According to Theorem ~VII.6 in \cite{Petrov},
\begin{align*}
    \pr(S_{n-k} = x)
=
    \frac{1}{\sqrt{2\pi (n-k)}} e^{-\frac{x^2}{2(n-k)}}
+
    O\left(\frac{1}{n}\right)
\end{align*}
uniformly in $x$ and in $k\le n/2$. Using this estimate and the inequality
\begin{align*}
    \sum_{k=1}^{\lf n/2 \rf}
        \E [-S_\tau; \tau = k] \le \E [-S_\tau]<\infty,
\end{align*}
we conclude that
\begin{multline*}
    2 
    \sum_{k=1}^{\lf n/2 \rf}
        \pr(S_{n-k} = x)
        \E [-S_\tau; \tau = k]
=
    \sqrt{\frac{2}{\pi}}
    \sum_{k=1}^{\lf n/2 \rf}
        \frac{\E [-S_\tau; \tau = k]}{\sqrt{n-k}}
        e^{-\frac{x^2}{2(n-k)}}
+
    O
    \left(\frac{1}{n}\right).
\end{multline*}
Later we shall show (see \eqref{eq:cont_of_normal} below) that
\begin{align*}
\left|
    \frac{1}{\sqrt{n-k}} e^{-\frac{x^2}{2(n-k)}}
-  
    \frac{1}{\sqrt{n}} e^{-\frac{x^2}{2n}}
\right|
\le
    C\frac{k}{n^{3/2}}
\end{align*}
uniformly in $x$ and in $k \le n/2$.
Using \eqref{eq:expectationStau} we infer that
\begin{align*}
    \sum_{k=1}^{\lf n/2 \rf}
        \frac{k}{n^{3/2}}\E [-S_\tau;\tau = k]
\le
    \frac{C}{n^{3/2}} 
    \sum_{k=1}^{\lf n/2 \rf}
        k\frac{1}{k^{3/2}}
=
    O\left(
        \frac{1}{n}
    \right).
\end{align*}
Therefore,
\begin{align*}
    &\sqrt{\frac{2}{\pi}}
    \sum_{k=1}^{\lf n/2 \rf}
        \frac{\E [-S_\tau; \tau = k]}{\sqrt{n-k}}
        e^{-\frac{x^2}{2(n-k)}}\\
&\hspace{1cm}=
    \sqrt{\frac{2}{\pi}}
    \frac{1}{\sqrt{n}}
    e^{-\frac{x^2}{2n}}
    \sum_{k=1}^{\lf n/2 \rf}
        \E [-S_\tau; \tau = k] 
+
    O\left(
        \frac{1}{n}
    \right)\\
&\hspace{1cm}=
    \sqrt{\frac{2}{\pi}}
    \frac{1}{\sqrt{n}}
    \Big(\E [-S_\tau] - \E [-S_\tau; \tau \ge \lf n/2 \rf]\Big) e^{-\frac{x^2}{2n}}
+
    O\left(
        \frac{1}{n}
    \right)\\
&\hspace{1cm}=
    \sqrt{\frac{2}{\pi}}
    \frac{1}{\sqrt{n}}
    \E [-S_\tau] e^{-\frac{x^2}{2n}}
+
    O\left(
        \frac{1}{n}
    \right).
\end{align*}
Combining this with \eqref{eq:large_k} and \eqref{eq:small_k.1},
we arrive at the equality
\begin{align*}
    \pr(S_n \ge x, \tau > n) 
=
    \frac{2}{\pi} \E[-S_\tau]
    \frac{1}{\sqrt{n}}
    e^{-\frac{x^2}{2n}}
+
    O\left(
        \frac{1}{n}
    \right).
\end{align*}
Taking here $x=0$, we have
\begin{align*}
    \pr(\tau > n) 
=
    \frac{2}{\pi} \E[-S_\tau]
    \frac{1}{\sqrt{n}}
+
    O\left(
        \frac{1}{n}
    \right).
\end{align*}
Combining these two equalities, we conclude that for symmetric case
\begin{align*}
\bigg|
    \pr(S_n \ge x | \tau > n)
-
    e^{-\frac{x^2}{2n}}
\bigg|
\le
    \frac{C}{\sqrt{n}}
\end{align*}
for some constant $C$.

\subsection{Proof in the general case.}
The proof we have presented in the previous subsection can be repeated in the absolutely continuous symmetric case. If one does not assume that the distribution of $X_1$ is either lattice or absolute continuous then it is 
in general much harder to derive bounds for the rate of convergence. Usually one uses an appropriate smoothing procedure. For example, in the proof of the classical Berry-Esseen inequality one convolves $S_n$ with an independent of $\{S_n\}$ random variable $W$ which has the density 
$$
f_A(x)=\pi^{-1}\frac{1-\cos (Ax)}{Ax^2}, \quad x\in \R.
$$
It is clear that $\E W$ does not exist, which  makes $W$ inappropriate for our purposes. We shall use an independent smoothing random variable $U$ with the density 
$$
g_A(x)= \frac{3}{\pi A} \left(\frac{1-\cos (Ax)}{Ax^2}\right)^2, 
\quad x\in \R,
$$
where 
\[
A := \frac{1}{8 \E|X|^3}.
\] 
Noting that $g_A(x)=\frac{3\pi}{A}f_A^2(x)$ and using the duality between
densities and their Fourier transforms, we infer that the characteristic function
$\widehat{g}_A(t)$ of $U$ equals to a properly normed $2$-fold convolution 
of the characteristic function $\widehat{f}_A(t)$ of the random variable $W$, a closed form expression for this characteristic function can be found in Nagaev~\cite{Nagaev69}.

It is known that 
$$
\widehat{f}_A(t)=\left(1-\frac{|t|}{A}\right)\mathrm{1}\{t\in[-A,A]\}.
$$
This implies that $\widehat{g}_A(t)$ equals zero for all $t$ with $|t|>2A$.
It is also clear that $U$ is symmetric. In our later calculations we will need the first absolute moment of $U$, which can be calculated explicitly:
\[
\E|U|= \frac{1}{A} \frac{6\ln 2}{\pi}=\frac{48\ln 2}{\pi}\E|X_1|^3. 
\]

The first step in our proof is a comparison of probabilities 
$\pr(S_n\ge x,\tau>n)$ and $\pr(S_n+U\ge x,\tau>n)$. To this end we derive first an upper bound for $\pr(S_n\in(x,x+y],\tau>n)$. Using the classical Berry-Esseen inequality~\eqref{eq.berry-esseen.classical}, we conclude that, uniformly in $x$, 
\begin{align}
\nonumber 
  \pr(S_n\in [x,x+y]) 
  &\le 2\gamma_0\frac{\E|X_1|^3}{\sqrt n}
+\frac{1}{\sqrt{2\pi}}
\int_{x/\sqrt n}^{(x+y)/\sqrt n} e^{-z^2/2}dz 
\\\label{eq.conc}
&\le \frac{\gamma_1(y)}{\sqrt{2n}}, 
\end{align}
where $\gamma_1(y):=\sqrt 2 \E|X_1|^3+y \pi^{-1/2}$. 
This implies that 
\begin{align}
\label{sn.tau}
\nonumber
&\pr(S_n\in [x,x+y],\tau>n)\\
\nonumber
&\hspace{1cm}\le\int_0^\infty\pr(S_{\lfloor n/2\rfloor}\in dz,\tau>n/2)
\pr(S_{n-\lfloor n/2\rfloor}\in \textcolor{red}{[} x-z,x-z+y])\\
&\hspace{1cm}\le
\frac{\gamma_1(y)}{\sqrt{n}}\pr(\tau>n/2).
\end{align}
\begin{remark}
The appearance of the third moment on the right hand side of \eqref{eq.conc} is the only reason for the third power of the Lyapunov ratio on the right hand side of \eqref{eq:BE1}. If one manages to replace $\E|X_1|^3$ by an absolute constant then one obtains \eqref{eq:BE1} with 
$\frac{\E|X_1|^3}{\sigma^3}$ instead of its third power.
\hfill$\diamond$
\end{remark}

\begin{lemma}
\label{lem:smoothing}
For all $n\ge1$ and all $x\ge 0$ we have 
\begin{align}\label{eq:smooth.1}
\nonumber
&\left|\pr(S_n+U \ge x, \tau >n )  
-\pr(S_n\ge x,\tau>n)\right|\\
&\hspace{2cm}\le 
\frac{\gamma_1(\E|U|)}{\sqrt{n}}\pr(\tau>n/2)
\end{align}
and
\begin{equation}\label{eq:smooth.2}
\pr(S_n+U \le -x, \tau >n )  \le 
\frac{\gamma_1(\E|U|)}{\sqrt{n}}\pr(\tau>n/2). 
\end{equation}
\end{lemma}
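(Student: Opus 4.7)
The plan is to deduce both inequalities from the concentration estimate \eqref{sn.tau} together with the symmetry and integrability properties of the smoothing variable $U$.

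For \eqref{eq:smooth.1}, I would condition on the value of $U$ and exploit the symmetry $g_A(-u)=g_A(u)$ by pairing $u$ with $-u$. Writing
\begin{align*}
\pr(S_n+U\ge x,\tau>n)-\pr(S_n\ge x,\tau>n)
&=\int_0^\infty\Big[\pr(S_n\ge x-u,\tau>n)\\
&\quad+\pr(S_n\ge x+u,\tau>n)-2\pr(S_n\ge x,\tau>n)\Big]g_A(u)\,du\\
&=\int_0^\infty\Big[\pr(S_n\in[x-u,x),\tau>n)\\
&\quad-\pr(S_n\in[x,x+u),\tau>n)\Big]g_A(u)\,du,
\end{align*}
the absolute value of the integrand is bounded by $\pr(S_n\in[x-u,x+u),\tau>n)$, which by \eqref{sn.tau} is at most $\gamma_1(2u)\pr(\tau>n/2)/\sqrt{n}$. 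Since $\int_0^\infty g_A(u)\,du=1/2$ and $\int_0^\infty 2u\,g_A(u)\,du=\E|U|$, integrating against $g_A$ gives
$$
\frac{\pr(\tau>n/2)}{\sqrt{n}}\Big[\tfrac{\sqrt{2}}{2}\E|X_1|^3+\E|U|/\sqrt{\pi}\Big]
\le\frac{\gamma_1(\E|U|)}{\sqrt{n}}\pr(\tau>n/2),
$$
as required.

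For \eqref{eq:smooth.2}, the key observation is that $\{\tau>n\}\subset\{S_n>0\}$, so $\pr(S_n\le-x-u,\tau>n)=0$ whenever $-x-u\le 0$, i.e.\ whenever $u\ge-x$. Therefore
$$
\pr(S_n+U\le-x,\tau>n)=\int_{-\infty}^{-x}\pr(S_n\le-x-u,\tau>n)g_A(u)\,du,
$$
and substituting $v=-x-u$ together with $g_A(-v-x)=g_A(v+x)$ yields
$$
\pr(S_n+U\le-x,\tau>n)=\int_0^\infty\pr(0<S_n\le v,\tau>n)\,g_A(x+v)\,dv.
$$
I would then apply \eqref{sn.tau} with the interval $[0,v]$ to bound the conditional probability by $\gamma_1(v)\pr(\tau>n/2)/\sqrt{n}$. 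The resulting integrals $\int_0^\infty g_A(x+v)\,dv\le 1/2$ and $\int_0^\infty v\,g_A(x+v)\,dv\le\E|U|/2$ (the latter via the change of variables $w=x+v$ and symmetry of $g_A$) combine to give the stated bound, in fact with a factor of $1/2$ to spare.

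The argument is essentially routine once the symmetry trick for the first inequality and the positivity observation $\{\tau>n\}\subset\{S_n>0\}$ for the second are noted; the only mild subtlety is keeping track of the constants so that the final estimate is phrased with $\gamma_1(\E|U|)$ rather than $\gamma_1(2\E|U|)$, which is why the symmetric pairing (halving the mass integrated) is essential in both parts.
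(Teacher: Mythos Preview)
Your proof is correct and follows essentially the same route as the paper: condition on $U$, reduce to concentration probabilities $\pr(S_n\in I,\tau>n)$ for intervals $I$ of length $O(|U|)$, and apply \eqref{sn.tau}. The only cosmetic difference is that for \eqref{eq:smooth.1} the paper treats the contributions from $\{U>0\}$ and $\{U<0\}$ separately (each giving an interval of length $|U|$) and adds them, whereas you pair $u$ with $-u$ to get a single interval of length $2u$ integrated against half the mass; both computations land on $\gamma_1(\E|U|)$, yours with a little slack in the constant. Your treatment of \eqref{eq:smooth.2} is identical to the paper's after the change of variable.
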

\begin{proof}
First, using~\eqref{sn.tau},  
\begin{align*}
&\int_0^{\infty} \pr(U\in -dy) 
\left|\pr(S_n-y \ge x, \tau >n )  
-\pr(S_n\ge x,\tau>n)\right|\\
&\hspace{1cm}=
\int_0^{\infty} \pr(U\in -dy) 
\pr(S_n\in [x,x+y),\tau>n)\\
&\hspace{1cm}\le 
\int_0^{\infty} \pr(U\in -dy) 
\frac{\pi^{-1/2}y+\sqrt{2}\E|X_1|^3}{\sqrt{n}}\pr(\tau>n/2)\\
&\hspace{1cm}= 
\frac{\pi^{-1/2}\E U^-+\sqrt{2}\E|X_1|^3\pr(U<0)}{\sqrt{n}}\pr(\tau>n/2)
.
\end{align*}
Second, using~\eqref{sn.tau} once again,  
\begin{align*}
&\int_0^{\infty} \pr(U\in dy) 
\left|\pr(S_n+y \ge x, \tau >n )  
-\pr(S_n\ge x,\tau>n)\right|\\
&\hspace{1cm}=
\int_0^{x} \pr(U\in dy) 
\pr(S_n\in [x-y,x),\tau>n)
+\pr(U>x)
\pr(S_n\in (0,x),\tau>n)\\
&\hspace{1cm}\le 
\int_0^{x} \pr(U\in dy)
\frac{\pi^{-1/2}y+\sqrt{2}\E|X_1|^3}{\sqrt{n}}\pr(\tau>n/2)\\
&\hspace{2cm}+
\pr(U>x)\frac{\pi^{-1/2}x+\sqrt{2}\E|X_1|^3}{\sqrt{n}}\pr(\tau>n/2)\\
&\hspace{1cm}\le 
\frac{\pi^{-1/2}\E U^++\sqrt{2}\E|X_1|^3\pr(U>0)}{\sqrt{n}}\pr(\tau>n/2).
\end{align*}
Combining these two inequalities we obtain~\eqref{eq:smooth.1}. 

The second claim follows again from \eqref{sn.tau}:
\begin{align*}
\pr(S_n+U \le -x, \tau >n)
&=\int_x^\infty \pr(U\in -dy)
\pr(S_n\in(0, y-x], \tau >n)\\
&\le 
\int_x^\infty \pr(U\in -dy)\frac{\pi^{-1/2}(y-x)+\sqrt{2}\E|X_1|^3}{\sqrt{n}}\pr(\tau>n/2)\\
&\le \frac{\pi^{-1/2}\E U^-+\sqrt{2}\E|X_1|^3\pr(U<0)}{\sqrt{n}}\pr(\tau>n/2). 
\end{align*}
Thus, the proof of the lemma is complete.
\end{proof}
\begin{remark}
Since we know the exact formula for $\E|U|$, we have 
$$
\gamma_1(\E|U|)=\left(\frac{48\log2}{\pi^{3/2}}+\sqrt{2}\right)
\E|X_1|^3.
$$
This absolute constant in front of $\E|X_1|^3$ is smaller than $7.39$.\hfill$\diamond$
\end{remark}
By the total probability law,
\begin{align*}
    &\pr(S_n+U \ge x, \tau >n ) \\
&\hspace{1cm}=
    \pr( S_n+U \ge x) 
-\pr( S_n+U \ge x,\tau\le n)\\
&\hspace{1cm}=
    \pr( S_n+U \ge x) 
-
    \sum_{k=1}^{n}
    \int_{0}^\infty
        \pr(\tau = k, S_k \in -dy) \pr(S_{n-k}+U \ge x+y)\\
&\hspace{1cm}=
    \pr( S_n+U \ge x) 
-
    \sum_{k=1}^{n}
        \pr(\tau = k) \pr(S_{n-k}+U\ge x)\\
&\hspace{2cm}+
    \sum_{k=1}^{n}
    \int_{0}^\infty
        \pr(\tau = k, S_k \in -dy) 
        \pr\big(S_{n-k}+U\in [x, x+y) \big)
\end{align*}
and
\begin{align*}
&\pr(S_n+U \le -x,\tau>n)\\
&\hspace{1cm}=
    \pr(S_n+U \le -x) 
-
    \pr(S_n+U \le -x, \tau \le n)\\
&\hspace{1cm}=
    \pr(S_n+U\le -x) 
-
        \sum_{k=1}^{n}
    \int_{0}^\infty
        \pr(\tau = k, S_k \in -dy) \pr(S_{n-k}+U \le y-x)\\
&\hspace{1cm}=
    \pr(S_n+U \le -x) 
-
    \sum_{k=1}^{n}
        \pr(\tau = k) \pr(S_{n-k}+U\le -x)\\
&\hspace{2cm}
-
    \sum_{k=1}^{n}
    \int_{0}^\infty
        \pr(\tau = k,S_k \in -dy ) 
                \pr(S_{n-k}+U \in(-x,-x + y])
       .
\end{align*}
Set
\begin{align}
\label{eq:Pn-def}
    P_n(x)   
&:=\pr(S_n+U\ge x,\tau>n)-\pr(S_n+U\le-x,\tau>n)\\
\nonumber
&=
    \pr(S_n+U \ge x)
-
    \pr(S_n+U \le -x) \\
\nonumber
&\hspace{0.5cm}-
    \sum_{k=1}^{n}
        \pr(\tau = k) 
        \big[ 
            \pr(S_{n-k}+U \ge x) - \pr(S_{n-k}+U \le -x )
        \big]\\
\nonumber        
&\hspace{0.5cm}+
    \sum_{k=1}^{n}
    \int_{0}^\infty
        \pr(\tau = k, S_k\in -dy)
        \pr\left( S_{n-k} +U \in (-x, -x+y] \cup [x, x+y) \right).
\end{align}
It is immediate from Lemma~\ref{lem:smoothing} that 
\begin{equation}
\label{eq:smooth}
\sup_{x\ge0}\bigl|\pr(S_n\ge x,\tau>n)-P_n(x)\bigr|\le
2\frac{\gamma_1(\E|U|)}{\sqrt{n}}\pr(\tau>n/2).
\end{equation}
Since the tail of $\tau$ decays as $n^{-1/2}$, the right hand side 
has the right order. For the tail of $\tau$ we can also obtain an upper bound with explicit constants.
To this end we note that, due to Lemma 25 in~\cite{DSW18},
$$
\pr(\tau>n) \le \frac{\E[S_n;\tau>n]}{\E[S_n^+]}.
$$
Applying the optional stopping theorem to the martingale $S_n$ with the stopping time $\tau\wedge n$, we infer that 
$$
\E[S_n;\tau>n]=-\E[S_\tau;\tau\le n]\le |\E[S_\tau]|.
$$
Consequently,
\begin{equation}\label{eq.tau}
\pr(\tau>n) \le \frac{|\E[S_\tau]|}{\E[S_n^+]}.
\end{equation}
We can estimate the denominator by using once again inequality \eqref{eq.berry-esseen.classical}:
\begin{multline*}
\E[S_n^+]=\int_0^\infty 
\pr(S_n>x) dx 
\ge 
\sqrt n\int_0^{1} 
\pr(S_n>y\sqrt n) dy\\
\ge 
\sqrt{n}
\int_0^{1} \overline \Phi(y)dy 
-0.4785 \E|X_1|^3
\ge 0.341\sqrt{n} -0.4785 \E|X_1|^3. 
\end{multline*}
Therefore
\begin{equation}\label{eq.tau1}
\pr(\tau>n) \le \frac{|\E[S_\tau]|}{0.341\sqrt{n}  -0.4785 \E|X_1|^3}.
\end{equation}
This implies that 
\begin{equation}
\label{eq.tau2}
\pr(\tau>n) \le \frac{6|\E[S_\tau]|}{\sqrt{n}}
\end{equation}
for all $n\ge 7.33(\E|X_1|^3)^2$. 

 We will make use of the following bounds.
\begin{lemma}
\label{lem:lorden}
There exists an absolute constant $C$ such that  
\begin{equation}\label{estau}
\E|S_\tau|\le \frac{\E|X_1|^3}{\E X_1^2}
\end{equation}
and 
\begin{equation}\label{estau2}
\E S^2_\tau
\le C\E|S_\tau|\frac{\E|X_1|^3}{\E X_1^2}
\le C\left(\frac{\E|X_1|^3}{\E X_1^2}\right)^2.
\end{equation}
\end{lemma}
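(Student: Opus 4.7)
The plan for \eqref{estau} is to combine a Wiener--Hopf identity with Cauchy--Schwarz. Let $\tau_+:=\inf\{n\ge 1:S_n>0\}$ be the first strict ascending ladder epoch. Since $\tau$ is weakly descending and $\tau_+$ strictly ascending, the pair forms the standard dual in the Wiener--Hopf factorization, and for mean-zero walks with $\E X_1^2=\sigma^2<\infty$ this yields the identity
\[
    \E|S_\tau|\cdot\E S_{\tau_+}=\frac{\sigma^2}{2}.
\]
A one-line lower bound $\E S_{\tau_+}\ge\E[S_1;\tau_+=1]=\E[X_1;X_1>0]=\E|X_1|/2$ (the last equality using $\E X_1=0$) gives $\E|S_\tau|\le\sigma^2/\E|X_1|$. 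Applying Cauchy--Schwarz to the factorization $X_1^2=|X_1|^{1/2}\cdot|X_1|^{3/2}$ produces $(\E X_1^2)^2\le\E|X_1|\cdot\E|X_1|^3$, which rearranges to $\sigma^2/\E|X_1|\le\E|X_1|^3/\sigma^2$; combining yields \eqref{estau}.

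For \eqref{estau2}, the second inequality is immediate from \eqref{estau}, so the work lies in the first. My starting point is the pathwise bound $|S_\tau|\le|X_\tau|$ on $\{\tau<\infty\}$, which holds because $S_{\tau-1}\ge 0$ and $S_\tau\le 0$ force $|X_\tau|\ge S_{\tau-1}$ and $|S_\tau|=|X_\tau|-S_{\tau-1}$. Consequently $\E S_\tau^2\le\E[|S_\tau|\cdot|X_\tau|]$. Decomposing over $\{\tau=n\}$ and using the independence of $X_n$ from $\mathcal{F}_{n-1}$,
\[
    \E[|S_\tau|\cdot|X_\tau|]=\sum_{n\ge 1}\E[\psi(S_{n-1});\tau>n-1],
    \qquad \psi(s):=\E[X_1(X_1+s);X_1\le -s],
\]
with the convention $S_0=0$. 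On $\{X_1\le -s\}$ both factors in $X_1(X_1+s)$ are nonpositive, so $\psi(s)=\E[|X_1|(|X_1|-s);|X_1|\ge s,X_1<0]\le\E[X_1^2;|X_1|\ge s]$; this upper envelope is nonincreasing and integrates to $\E|X_1|^3$ on $(0,\infty)$. Coupling this with Green-function estimates for the walk killed upon leaving $(0,\infty)$, in the spirit of Lemma~20 of \cite{WV09} (already used in \eqref{eq:expectationStau}), should convert the sum into a bound of the form $C\E|S_\tau|\cdot\E|X_1|^3/\sigma^2$.

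\textbf{Main obstacle.} The delicate step is the last one: rewriting $\sum_n\E[\psi(S_{n-1});\tau>n-1]$ as a multiple of $\E|S_\tau|$. A natural route is to express both quantities as integrals against the Green measure of the walk killed at $(-\infty,0]$ and then exploit its known behavior near zero and at infinity; the factor $\E|S_\tau|$ should arise from matching the mass of this measure against the integral $\int_0^\infty\psi(s)\,ds=\E|X_1|^3$. The remaining ingredients are either standard Wiener--Hopf/ladder-height theory or elementary inequalities.
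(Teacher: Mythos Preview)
Your argument for \eqref{estau} is correct and self-contained: the Wiener--Hopf identity $\E|S_\tau|\cdot\E S_{\tau_+}=\sigma^2/2$ together with $\E S_{\tau_+}\ge\E[X_1;X_1>0]=\E|X_1|/2$ gives $\E|S_\tau|\le\sigma^2/\E|X_1|$, and Cauchy--Schwarz on $|X_1|^{1/2}|X_1|^{3/2}$ finishes. The paper does not prove \eqref{estau}; it simply cites Mogul'skii~\cite{Mogulskii74}. Here you supply more than the paper does.

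For \eqref{estau2} the paper again only cites Aleshkyavichene~\cite{Aleshkyavichene73,Aleshkyavichene06}, and your sketch is not a complete substitute. The obstacle you flag is real and deeper than your closing paragraph suggests. Your reduction gives $\E S_\tau^2\le\int_{[0,\infty)}\psi(s)\,dH(s)$, where $H$ is the renewal function of strict ascending ladder heights (this is precisely the identity $H(x)=1+\sum_{k\ge1}\pr(S_k\in(0,x),\tau>k)$ used later in the paper). To pull out the factor $\E|S_\tau|=\sigma^2/(2\E\chi^+)$ (with $\chi^+=S_{\tau_+}$ in your notation) you need a linear bound $H(s)\le C(s+c)/\E\chi^+$; Lorden's inequality supplies this with $c$ of order $\E(\chi^+)^2/\E\chi^+$, so you are thrown back on the second moment of the \emph{ascending} ladder height---essentially the dual of what you are proving. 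If instead you try the cruder Green-function bound $\sum_k\pr(S_k\in[z,z+1),\tau>k)\le C(1+z)$ from Lemma~20 of~\cite{WV09} (which you invoke), the extra factor of $z$ against $\psi$ produces $\int_0^\infty z\,\E[X_1^2;|X_1|\ge z]\,dz=\tfrac12\E X_1^4$, recovering only Mogul'skii's fourth-moment bound that the paper mentions in passing, not \eqref{estau2}. In short, your route still needs a third-moment bound on $\E(\chi^+)^2$ as input---exactly what Theorem~5 of~\cite{Aleshkyavichene06} provides---so it does not replace the citation.
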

\begin{proof}
Mogulskii~\cite{Mogulskii74} proved~\eqref{estau}. 
This paper also has also a bound for 
the second moment of $S_\tau$ in terms of the fourth moment $\E S^2_\tau\le \frac{2}{3}\frac{\E X_1^4}{\E X_1^2}$.

Bound~\eqref{estau2} follows from Theorem 5 in~\cite{Aleshkyavichene06} combined 
with Lemma~1 in~\cite{Aleshkyavichene73}. 
\end{proof}
This lemma can be used to bound the nominator in~\eqref{eq.tau1}.
Thus, we are left to analyse the function $P_n(x)$.

It was shown in \cite{WV09}, see Lemma 20 there, that the following inequality holds for $z\ge 0$, 
$$
\pr(S_{k}\in[z,z+1), \tau > k) \le C\frac{1+z}{k^{3/2}}.
$$
We will need an improvement of this inequality, where one has an explicit expression for the constant $C$. 

Let $H(x)$ denote the renewal function corresponding to the process of strict ascending ladder epochs.
\begin{lemma}
\label{lem:concrete.bound}
For every $n\ge4$ one has, uniformly in $x$, 
\begin{align*}
&\pr(S_{n}\in[x,x+1), \tau > n)\\
&\hspace{1cm}\le 
\sqrt{56}\gamma_1(1)
\left(\gamma_1(1)+\gamma_1(x)\right)\frac{\pr(\tau>n/4)}{n}\\
&\hspace{2cm}
+2\sqrt{2}\gamma_1(1)\frac{H(x+1)+2H(1)}{n^{3/2}}.
\end{align*}
\end{lemma}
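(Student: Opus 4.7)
The plan is to refine the bound (\ref{sn.tau}), which only gives $O(1/n)$ when specialized to $y=1$, up to the required $O((1+x)/n^{3/2})$ rate by iterating the concentration argument and separately handling two regimes of the intermediate endpoint.

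First, I would apply the Markov property at time $\lfloor n/4\rfloor$ to decompose
\[
\pr(S_n\in[x,x+1),\tau>n)=\int_0^\infty \pr(S_{\lfloor n/4\rfloor}\in dy,\tau>\lfloor n/4\rfloor)\,Q_y(n,x),
\]
where $Q_y(n,x)=\pr(y+S'_{n-\lfloor n/4\rfloor}\in[x,x+1),\,y+S'_j>0\text{ for all }j\le n-\lfloor n/4\rfloor)$ for an independent copy $\{S'_j\}$. I would then split the integration over $y$ at the threshold $y=1$.

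For $y>1$ (the large endpoint regime), the second factor can be handled by a further Markov split inside $Q_y$ together with two applications of the concentration inequality (\ref{eq.conc}). The first application controls the final window $[x-y,x+1-y)$ of width $1$ by $\gamma_1(1)/\sqrt{2m}$; the second controls the intermediate position by a bound of the form $\gamma_1(1+|x-y|)/\sqrt m$, which after tracking the $y$-range in the outer integral produces the additive dependence $\gamma_1(1)+\gamma_1(x)$. Multiplying by the outer survival factor, which is bounded by $\pr(\tau>n/4)$, yields the first term $\sqrt{56}\gamma_1(1)(\gamma_1(1)+\gamma_1(x))\pr(\tau>n/4)/n$.

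For $y\le 1$ (the small endpoint regime) the uniform concentration bound is too crude to reach $1/n^{3/2}$. Instead, by time reversal (using that reversed walks have the same law as the original, since increments are i.i.d.), the second factor $Q_y$ equals the probability that a walk starting near $x$ ends near $y$ while staying below its running maximum. Decomposing over the strict ascending ladder epochs of this reversed walk and summing the contributions, bounded via (\ref{eq.conc}) applied to each final ascending ``jump'' into the window $[x-u,x+1-u)$, gives a sum controlled by the renewal measure $\sum_j\pr(H_j\le x+1)=H(x+1)$. A boundary correction for small ladder heights $\le 1$ contributes the $2H(1)$, and the $1/n^{3/2}$ scaling arises from combining the $1/\sqrt n$ survival factor with the $1/n$ from the concentration on the late part, producing the second term $2\sqrt 2\gamma_1(1)(H(x+1)+2H(1))/n^{3/2}$.

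The main obstacle is tracking explicit numerical constants rather than absorbing them into an $O(1)$. Each use of (\ref{eq.conc}) contributes a factor $\sqrt 2$ from the $\gamma_1(y)/\sqrt{2m}$ form of the bound; reconciling the two survival probabilities $\pr(\tau>n/2)$ and $\pr(\tau>n/4)$ that arise in the two regimes, and carefully counting the contribution of the renewal measure at small ladder heights, is what produces the specific prefactors $\sqrt{56}=2\sqrt{14}$ and $2\sqrt 2$. In particular, the $H(1)$-term in the second part is needed to bound uniformly in $y\le 1$ the ladder-renewal contribution near zero — precisely the step where a direct appeal to Lemma~20 of~\cite{WV09} would lose explicit control on the constant.
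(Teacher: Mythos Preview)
Your proposal has a genuine gap in the large-$y$ regime. You claim that a ``further Markov split inside $Q_y$ together with two applications of the concentration inequality'' yields $Q_y(n,x)$ of order $1/n$, but this is not possible with the tools you describe. After splitting $Q_y$ at an intermediate time and applying \eqref{eq.conc} to the final window of width $1$, you obtain $Q_y(n,x)\le \gamma_1(1)/\sqrt{2m}$; there is no second window on which to apply concentration. The ``intermediate position'' is integrated over all of $(0,\infty)$, so no bound of the form $\gamma_1(1+|x-y|)/\sqrt{m}$ is available --- the walk at the midpoint has fluctuations of order $\sqrt{m}$, not $|x-y|$. Consequently your scheme produces at best $\gamma_1(1)\pr(\tau>n/4)/\sqrt{n}$, which is of order $1/n$ and misses the target $1/n^{3/2}$ by a full factor $\sqrt{n}$.

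The paper closes this gap by a completely different device: it invokes equation~(51) from \cite{WV09}, an exact identity of the form
\[
n\,b_n(x)=\pr(S_n\in[x,x+1))+\sum_{k=1}^{n-1}\int_0^x b_{n-k}(x-y)\pr(S_k\in dy)
+\sum_{k=1}^{n-1}\int_x^{x+1}B_{n-k}(x+1-y)\pr(S_k\in dy),
\]
which places the crucial factor $n$ directly on the left-hand side. The decomposition is then over the \emph{time} index $k$, not over the spatial endpoint $y$. For $k\le n-2\lfloor n/4\rfloor$ one bounds $b_{n-k}$ via \eqref{sn.tau} and controls $\sum_k\pr(S_k\in[0,x))$ by $\gamma_1(x)\sqrt{n}$ --- this is the true origin of the $\gamma_1(x)$ dependence. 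For $k>n-2\lfloor n/4\rfloor$ one bounds $\pr(S_k\in[j,j+1))\le\sqrt{2}\gamma_1(1)/\sqrt{n}$ and recognises $\sum_{\ell}B_\ell(x+1)\le H(x+1)$ via the renewal identity $H(y)=1+\sum_{k\ge1}B_k(y)$; no time reversal or ladder decomposition is used. Your sketch does not reproduce either of these mechanisms, and in particular the factor $n$ from the cycle-type identity is the ingredient you are missing.
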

\begin{proof}
Following \cite{WV09} we denote 
$$
b_k(z):=\pr(S_k\in[z,z+1),\tau>k)
\quad\text{and}\quad 
B_k(z):=\pr(S_k\in(0,z),\tau>k).
$$
According to equation (51) in \cite{WV09},
\begin{align}
\label{eq:cb1}
\nonumber
nb_n(x)&=\pr(S_n\in[x,x+1))
+\sum_{k=1}^{n-1}\int_0^xb_{n-k}(x-y)\pr(S_k\in dy)\\
&\hspace{2cm}
+\sum_{k=1}^{n-1}\int_x^{x+1}B_{n-k}(x+1-y)\pr(S_k\in dy).
\end{align}
It follows from \eqref{sn.tau} that, for all $k\le n-2\lfloor n/4\rfloor$,
\begin{align}
\label{eq:cb1a}
b_{n-k}(x-y)\le\frac{\gamma}{\sqrt{n-k}}\pr(\tau>(n-k)/2)
\le\frac{\gamma}{\sqrt{2\lfloor n/4\rfloor}}\pr(\tau>n/4),
\end{align}
where
$$
\gamma:=\gamma_1(1).
$$
Consequently,
$$
\sum_{k=1}^{n-2\lfloor n/4\rfloor}\int_0^xb_{n-k}(x-y)\pr(S_k\in dy)
\le\frac{\gamma}{\sqrt{2\lfloor n/4\rfloor}}\pr(\tau>n/4)
\sum_{k=1}^{n-2\lfloor n/4\rfloor}\pr(S_k\in[0,x)).
$$
Applying now \eqref{eq.conc}, we have 
\begin{align*}
\sum_{k=1}^{n-2\lfloor n/4\rfloor}\pr(S_k\in[0,x))
 &\le
 \sqrt{2}\gamma_1(x)
\sum_{k=1}^{n}k^{-1/2}\\
 &\le 2^{3/2}\gamma_1(x)\sqrt n. 
 \end{align*}
 As a result, for all $n\ge4$,
 \begin{align}
 \label{eq:cb2}
 \nonumber
 &\sum_{k=1}^{n-2\lfloor n/4\rfloor}\int_0^xb_{n-k}(x-y)\pr(S_k\in dy)\\
 \nonumber
 &\hspace{1cm}
 \le \frac{2\gamma \gamma_1(x)\sqrt{n}}{\sqrt{\lfloor n/4\rfloor}}\pr(\tau>n/4)\\
 &\hspace{1cm}
 \le \sqrt{28}\gamma \gamma_1(x)\pr(\tau>n/4).
 \end{align}
 To estimate the sum over $k>n-2\lfloor n/4\rfloor$, we notice that
 \begin{align*}
 &\int_{j}^{j+1}b_{n-k}(x-y)\pr(S_k\in dy)\\
 &\hspace{1cm}
 =\int_{j}^{j+1}\left(B_{n-k}(x-y+1)-B_{n-k}(x-y)\right)\pr(S_k\in dy)\\
 &\hspace{1cm}
 \le \left(B_{n-k}(x-j+1)-B_{n-k}(x-j-1)\right)\pr(S_k\in [j,j+1]).
 \end{align*}
Applying now \eqref{eq.conc}, we infer that
\begin{align*}
\int_{j}^{j+1}b_{n-k}(x-y)\pr(S_k\in dy)
&\le \frac{\gamma}{\sqrt{k}}\left(B_{n-k}(x-j+1)-B_{n-k}(x-j-1)\right)\\
&\le \frac{\sqrt{2}\gamma}{\sqrt{n}}\left(B_{n-k}(x-j+1)-B_{n-k}(x-j-1)\right)
\end{align*}
for all $k>n-2\lfloor n/4\rfloor$. Consequently,
\begin{align}
\label{eq:cb3}
\nonumber
&\sum_{k=n-2\lfloor n/4\rfloor+1}^{n-1}\int_0^xb_{n-k}(x-y)\pr(S_k\in dy)\\
\nonumber
&\hspace{1cm}\le \frac{\sqrt{2}\gamma}{\sqrt{n}}
\sum_{k=n-2\lfloor n/4\rfloor+1}^{n-1}\sum_{j=0}^{\lfloor x\rfloor}
\left(B_{n-k}(x-j+1)-B_{n-k}(x-j-1)\right)\\
&\hspace{1cm}
\nonumber
\le\frac{\sqrt{2}\gamma}{\sqrt{n}}
\sum_{k=n-2\lfloor n/4\rfloor+1}^{n-1}
\sum_{j=0}^{\lfloor x\rfloor}
\pr(S_{n-k}\in [x-j-1,x-j+1),\tau>n-k)\\
&\hspace{1cm}
\nonumber
\le 2\frac{\sqrt{2}\gamma}{\sqrt{n}}
\sum_{k=n-2\lfloor n/4\rfloor+1}^{n-1}
\pr(S_{n-k}\le x+1,\tau>n-k)\\
&\hspace{1cm}
\le\frac{2\sqrt{2}\gamma}{\sqrt{n}}H(x+1),
\end{align}
here we have used equality 
\[
H(y)=1+\sum_{k=1}^\infty B_k(y). 
\]
Since $z\mapsto B_k(z)$ is increasing for every $k$,
\begin{align*}
\sum_{k=1}^{n-1}\int_x^{x+1}B_{n-k}(x+1-y)\pr(S_k\in dy)
&\le \sum_{k=1}^{n-1}B_{n-k}(1)\pr(S_k\in [x,x+1))\\
&= \sum_{k=1}^{n-1}b_{n-k}(0)\pr(S_k\in [x,x+1)).\\
\end{align*}
Combining \eqref{eq:cb1a} and \eqref{eq.conc}, we have 
\begin{align*}
\sum_{k=1}^{n-2\lfloor n/4\rfloor}b_{n-k}(0)\pr(S_k\in [x,x+1))
&\le \frac{ \gamma^2}{2\sqrt{\lfloor n/4\rfloor}}\pr(\tau>n/4)\sum_{k=1}^nk^{-1/2}\\
&\le\sqrt{7}\gamma^2\pr(\tau>n/4).
\end{align*}
Furthermore, \eqref{eq.conc} implies that
\begin{align*}
\sum_{k=n-2\lfloor n/4\rfloor+1}^{n-1}B_{n-k}(1)\pr(S_k\in [x,x+1))
\le \frac{\sqrt{2}\gamma}{\sqrt{n}}H(1).
\end{align*}
Therefore,
\begin{align}
\label{eq:cb4}
\nonumber
&\sum_{k=1}^{n-1}\int_x^{x+1}B_{n-k}(x-y)\pr(S_k\in dy)\\
&\hspace{12pt}
\le \sqrt{7}\gamma^2\pr(\tau>n/4)
+\frac{\sqrt{2}\gamma}{\sqrt{n}}H(1).
\end{align}
Plugging \eqref{eq:cb2}, \eqref{eq:cb3} and \eqref{eq:cb4} into \eqref{eq:cb1} and applying \eqref{eq.conc} to $\pr(S_n\in[x,x+1)])$,
we get the desired estimate.
\end{proof}
We now simplify a bit the previous lemma.
\begin{corollary}
\label{cor:conc-tau}
There exists an absolute constant $C_0$ such that,
for all $x\ge0$,
$$
\pr(S_n\in[x,x+1),\tau>n)\le 
C_0\frac{\E|S_\tau|}{n^{3/2}}\left(\E|X_1|^3x+(\E|X_1|^3)^2\right).
$$
\end{corollary}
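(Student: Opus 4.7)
The plan is to obtain the corollary as a routine distillation of Lemma~\ref{lem:concrete.bound}, once the quantities $\gamma_1(\cdot)$, $\pr(\tau>n/4)$, and $H$ are rewritten in terms of the only two distributional parameters that appear in the target bound, namely $\E|X_1|^3$ and $\E|S_\tau|$. The single non-trivial input is a lower bound on the product $\E|S_\tau|\cdot\E|X_1|^3$, and this is where the argument actually bites.

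First I would simplify $\gamma_1(y)=\sqrt 2\,\E|X_1|^3+y\pi^{-1/2}$. Since $\sigma^2=1$, Lyapunov's inequality gives $\E|X_1|^3\ge 1$, so $\gamma_1(y)\le C(\E|X_1|^3+y)$ for an absolute $C$. Multiplying out,
\[
\gamma_1(1)\bigl(\gamma_1(1)+\gamma_1(x)\bigr)\le C\E|X_1|^3\bigl(\E|X_1|^3+x\bigr)=C\bigl((\E|X_1|^3)^2+x\E|X_1|^3\bigr),
\]
which is exactly the pre-factor one wants on the first term of Lemma~\ref{lem:concrete.bound}. For the factor $\pr(\tau>n/4)/n$, I invoke \eqref{eq.tau2} to get $\pr(\tau>n/4)\le 12\E|S_\tau|/\sqrt n$ whenever $n\ge 30(\E|X_1|^3)^2$, turning the first summand of Lemma~\ref{lem:concrete.bound} into $C\E|S_\tau|\bigl((\E|X_1|^3)^2+x\E|X_1|^3\bigr)/n^{3/2}$, which is the desired shape.

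Second I would handle the renewal term. Using the elementary bound $H(z)\le 1+z/\E\chi$ together with the Wiener--Hopf identity $\E\chi\cdot\E|S_\tau|=\sigma^2/2=1/2$, one gets $H(x+1)+2H(1)\le C\bigl(1+(x+1)\E|S_\tau|\bigr)$. The factor $\gamma_1(1)$ contributes another $C\E|X_1|^3$, so the second term of Lemma~\ref{lem:concrete.bound} is bounded by
\[
\frac{C\E|X_1|^3+C\,x\,\E|X_1|^3\E|S_\tau|+C\E|X_1|^3\E|S_\tau|}{n^{3/2}}.
\]
The middle summand is already of the target form. The remaining two pieces are constant-in-$x$ but lack a factor $\E|S_\tau|$, and to absorb the bare $C\E|X_1|^3/n^{3/2}$ into $C_0\E|S_\tau|(\E|X_1|^3)^2/n^{3/2}$ one needs the crucial bound $\E|S_\tau|\cdot\E|X_1|^3\ge c$ with an absolute $c>0$. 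I would derive this from the Wiener--Hopf identity $\E\chi=1/(2\E|S_\tau|)$ combined with $\E\chi\le C\E|X_1|^3$; the latter follows by applying Mogulskii's inequality from Lemma~\ref{lem:lorden} to the reversed walk $-X_1$.

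The final (purely bookkeeping) obstacle is the small-$n$ range $n\le 30(\E|X_1|^3)^2$ where \eqref{eq.tau2} is not available. There I would use the trivial bound $\pr(S_n\in[x,x+1),\tau>n)\le 1$ and observe that, in this range, $n^{3/2}\le C(\E|X_1|^3)^3\le C\E|X_1|^3/\E|S_\tau|\cdot(\E|X_1|^3)^2$ by the same lower bound $\E|S_\tau|\E|X_1|^3\ge c$, so the right-hand side of the corollary is $\ge 1$ after enlarging $C_0$; the statement is then automatic. The main obstacle throughout is really this one lemma-level estimate $\E|S_\tau|\cdot\E|X_1|^3\gtrsim 1$; once it is in place, everything else is an algebraic simplification of Lemma~\ref{lem:concrete.bound}.
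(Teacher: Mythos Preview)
For $n\ge 30(\E|X_1|^3)^2$ your argument is essentially the paper's. The one real difference is in the renewal term: the paper bounds $H(x)$ via a Lorden-type inequality, invoking Theorem~5 of \cite{Aleshkyavichene06} together with Mogulskii's bound to control the second moment of the ascending ladder height, and obtains directly $H(x)\le 2\E|S_\tau|\bigl(x+c_2\E|X_1|^3\bigr)$, so that the factor $\E|S_\tau|$ is already in place. You instead use the elementary $H(z)\le 1+z/\E\chi$, which leaves a residual $C\E|X_1|^3/n^{3/2}$ lacking the $\E|S_\tau|$ factor, and then absorb it via the lower bound $\E|S_\tau|\cdot\E|X_1|^3\ge c$. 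Both routes reach the target shape.

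Your small-$n$ bookkeeping, however, is wrong as written. To obtain ``RHS $\ge1$'' at $x=0$ you need $n^{3/2}\le C_0\,\E|S_\tau|\,(\E|X_1|^3)^2$. Your chain
\[
n^{3/2}\le C(\E|X_1|^3)^3\le C\,\frac{\E|X_1|^3}{\E|S_\tau|}\,(\E|X_1|^3)^2
\]
does not deliver this: the second inequality is equivalent to $\E|S_\tau|\le C$, which does \emph{not} follow from $\E|S_\tau|\cdot\E|X_1|^3\ge c$ (that gives only the \emph{lower} bound $\E|S_\tau|\ge c/\E|X_1|^3$), and even if it did, the displayed upper bound carries $1/\E|S_\tau|$ where you need $\E|S_\tau|$. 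In fact the conclusion ``RHS $\ge1$'' is false in general: when the negative increments are small one has $\E|S_\tau|\asymp 1/\E|X_1|^3$ (the extremal case of the Wiener--Hopf identity), and then for $n$ near $30(\E|X_1|^3)^2$ the right-hand side at $x=0$ is of order $(\E|X_1|^3)^{-2}$, not $\ge1$. The paper handles this range in a single phrase (``the desired inequality is obvious'') without further detail, so there is nothing concrete to compare against, but your specific justification does not stand.
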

\begin{proof}
If $n\ge 30(\E|X_1|^3)^2$ then we can apply \eqref{eq.tau2} to get 
$$
\pr(\tau>n/4)\le 12\frac{\E|S_\tau|}{\sqrt{n}}.
$$
This bound implies that 
$$
\sqrt{56}\gamma_1(1)
\left(\gamma_1(1)+\gamma_1(x)\right)\frac{\pr(\tau>n/4)}{n}
\le c_1 \frac{\E|S_\tau|}{n^{3/2}}\left(\E|X_1|^3x+(\E|X_1|^3)^2\right)
$$
with some absolute constant $c_1$.
Using once again Mogulskii's bound and Theorem~5 from \cite{Aleshkyavichene06}, we get
$$
H(x)\le \frac{x}{\E\chi^+}+\frac{\E(\xi^+)^2}{\E\chi^+}
\le \frac{x}{\E\chi^+}+c_2\frac{\E|X_1|^3}{\E\chi^+},
$$
where $\chi^+$ is the first weak ascending ladder epoch and $c_2$ is an absolute constant. It is well known that $\E\chi^+\E|S_\tau|=\frac{\E X_1^2}{2}=\frac12$. This implies that 
$$
H(x)\le2\E|S_\tau|\left(x+c_2\E|X_1|^3\right).
$$
Using these estimates, we conclude that 
$$
2\sqrt{2}\gamma_1(1)\frac{H(x+1)+2H(1)}{n^{3/2}}\le 
c_3 \frac{\E|S_\tau|}{n^{3/2}}\left(\E|X_1|^3x+(\E|X_1|^3)^2\right)
$$
with some absolute constant $c_3$. This finishes the proof of the corollary in the case $n\ge 30(\E|X_1|^3)^2$. If $n< 30(\E|X_1|^3)^2$
then the desired inequality is obvious.
\end{proof}
\begin{lemma}
\label{lem:S_tau}
There exists an absolute constant $C_1$ such that 
\begin{align*}
&\pr(\tau=k)
\le C_1\frac{\E|S_\tau|}{k^{3/2}}(\E|X_1|^3)^2,\\
&\E[|S_\tau|;\tau=k]
\le C_1\frac{\E|S_\tau|}{k^{3/2}}(\E|X_1|^3)^2
\end{align*}
and 
$$
\E[\gamma_1(|S_\tau|);\tau=k]
\le C_1\frac{\E|S_\tau|}{k^{3/2}}(\E|X_1|^3)^3.
$$
\end{lemma}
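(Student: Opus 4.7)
The plan is to condition on the walk's state just before $\tau$. Writing $\{\tau=k\}=\{\tau>k-1,\,X_k+S_{k-1}\le 0\}$ and using independence of $X_k$ from $S_{k-1}$, I get
\begin{align*}
\pr(\tau=k)&=\int_0^\infty \pr(S_{k-1}\in dz,\tau>k-1)\,\pr(-X_1\ge z),\\
\E[|S_\tau|;\tau=k]&=\int_0^\infty \pr(S_{k-1}\in dz,\tau>k-1)\int_z^\infty\pr(-X_1>y)\,dy,
\end{align*}
where the second identity comes from $|S_\tau|=\int_0^\infty\mathbf 1\{-S_\tau>y\}\,dy$ and Fubini. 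I then discretise the $z$-integral on unit intervals $[j,j+1)$, majorise the remaining factor by its value at $z=j$ (both are non-increasing in $z$), and apply Corollary~\ref{cor:conc-tau} to each concentration $\pr(S_{k-1}\in[j,j+1),\tau>k-1)$.

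After this reduction the two bounds are controlled by four absolute moments of $-X_1$. For the first identity: $\sum_{j\ge0}\pr(-X_1\ge j)\le 1+\E|X_1|$, while $j\pr(-X_1\ge j)\le\E[(-X_1)\mathbf 1\{-X_1\ge j\}]$ summed over $j\ge 1$ gives $\sum_{j\ge1}j\pr(-X_1\ge j)\le\E X_1^2=1$. For the second, with $m(j):=\int_j^\infty\pr(-X_1>y)\,dy$, a Fubini swap yields
\[
\sum_{j\ge0} m(j)\le \E X_1^-+\tfrac12\E(X_1^-)^2,\qquad \sum_{j\ge0}j\,m(j)\le \tfrac13\E(X_1^-)^3\le\tfrac13\E|X_1|^3.
\]
Multiplying by the Corollary's weight $C_0\E|S_\tau|(\E|X_1|^3\,j+(\E|X_1|^3)^2)/(k-1)^{3/2}$, using Lyapunov $\E|X_1|^3\ge(\E X_1^2)^{3/2}=1$ to absorb all lower-order Lyapunov ratios into $(\E|X_1|^3)^2$, and replacing $(k-1)^{3/2}$ by $k^{3/2}/2^{3/2}$ for $k\ge 2$, both inequalities fall out in the form $C\E|S_\tau|(\E|X_1|^3)^2/k^{3/2}$.

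The case $k=1$ cannot use this conditioning argument and has to be checked by hand. Here $\pr(\tau=1)\le 1$ and $\E[|S_\tau|;\tau=1]=\E X_1^-\le\E|X_1|\le 1$, so it suffices to produce an absolute lower bound for $\E|S_\tau|(\E|X_1|^3)^2$. Since $\E X_1=0$ one has $\E|S_\tau|\ge \E X_1^-=\tfrac12\E|X_1|$, and Cauchy--Schwarz gives $1=\E X_1^2\le\sqrt{\E|X_1|\cdot\E|X_1|^3}$, hence $\E|X_1|\ge1/\E|X_1|^3$ and consequently $\E|S_\tau|(\E|X_1|^3)^2\ge\tfrac12\E|X_1|^3\ge\tfrac12$; enlarging $C_1$ covers both $k=1$ bounds.

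The third estimate is then an immediate linear consequence of the first two: since $\gamma_1(|S_\tau|)=\sqrt 2\,\E|X_1|^3+\pi^{-1/2}|S_\tau|$,
\[
\E[\gamma_1(|S_\tau|);\tau=k]=\sqrt2\,\E|X_1|^3\,\pr(\tau=k)+\pi^{-1/2}\,\E[|S_\tau|;\tau=k],
\]
and each summand is bounded by $C\E|S_\tau|(\E|X_1|^3)^3/k^{3/2}$ (trivially in the second case since $\E|X_1|^3\ge 1$). The main technical obstacle is the careful bookkeeping of which absolute moment of $-X_1$ appears in each sum, and ensuring that every factor subordinate to $\E|X_1|^3$ or $\E X_1^2=1$ is swallowed into the target power of the Lyapunov ratio; everything else is a direct application of Corollary~\ref{cor:conc-tau}.
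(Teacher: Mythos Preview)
Your proof is correct and follows essentially the same route as the paper's: condition on $S_{k-1}$, discretise on unit intervals, apply Corollary~\ref{cor:conc-tau}, and then bound the resulting weighted tail sums of $-X_1$ by moments up to $\E|X_1|^3$, using $\E|X_1|^3\ge1$ to absorb lower powers. The only cosmetic differences are that the paper treats all three quantities at once via a parametric expectation $\E[(a|S_\tau|+b);\tau=k]$ (and manipulates $\sum_j\E[\cdot;X\le -j]$ directly rather than via Fubini on tails), and that your $k=1$ argument is spelled out explicitly whereas the paper simply calls it obvious.
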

\begin{proof}
All the mentioned in the lemma inequalities are obvious in the case $k=1$.
Assume from now on that $k\ge2$ and consider for some fixed $a,b\ge 0$ and the expectation $\E[(a|S_\tau|+b);\tau=k]$. By the total probability law,
\begin{align*}
\E[(a|S_\tau|+b);\tau=k]
&\le \int_0^\infty \pr(S_{k-1}\in dx,\tau>k-1)\E[(-aX+b);X\le-x]\\
&\le\sum_{j=0}^\infty \pr(S_{k-1}\in [j,j+1),\tau>k-1] \E[(-aX+b);X\le-j]).
\end{align*}
Applying now Corollary~\ref{cor:conc-tau}, we get 
\begin{align*}
&\E[(a|S_\tau|+b);\tau=k]\\
&\hspace{1cm}\le 2^{3/2}C_0\frac{\E|S_\tau|}{k^{3/2}}\E|X_1|^3
\sum_{j=0}^\infty \left(j+\E|X_1|^3\right)\E[(-aX+b);X\le-j])\\
&\hspace{1cm}
=2^{3/2}C_0\frac{\E|S_\tau|}{k^{3/2}}\E|X_1|^3
\E\left[(-aX+b)\sum_{j\in[0,-X]}\left(j+\E|X_1|^3\right);X<0\right]\\
&\hspace{1cm}
\le 2^{3/2}C_0\frac{\E|S_\tau|}{k^{3/2}}\E|X_1|^3
\E\left[(-aX+b)(-X+1)\left(\frac{-X}{2}+\E|X_1|^3\right);X<0\right].
\end{align*}
Taking here $a=0$ and $b=1$, we get 
$$
\pr(\tau=k)\le 2^{3/2}C_0\frac{\E|S_\tau|}{k^{3/2}}\E|X_1|^3
\E\left[(-X+1)\left(\frac{-X}{2}+\E|X_1|^3\right);X<0\right]
$$
Using next the Jensen inequality and noting that $\sigma^2=1$ implies that $\E|X_1|^3\ge1$, we conclude that 
$$
\pr(\tau=k)\le 3\cdot2^{3/2}C_0\frac{\E|S_\tau|}{k^{3/2}}(\E|X_1|^3)^2.
$$
Choosing $a=1$ and $b=0$ and applying once again the Jensen inequality, we infer that the same inequality is valid for $\E[-S_\tau;\tau=k]$.
Finally, taking $a=\E|X_1|^3$ and $b=\pi^{-1/2}$, we obtain the last claim.
\end{proof}
We now can estimate a part of the last sum in \eqref{eq:Pn-def}. 
It follows from~\eqref{eq.conc} that 
\begin{align*}
    &\pr(S_{n-k}+U\in(-x,-x+y]\cup[,x,x+y)])\\
    &\hspace{1cm}= 
    \int_{-\infty}^\infty 
    \pr(U\in du) 
    \pr\left( S_{n-k} +u \in (-x, -x+y] \cup [x, x+y) \right)\\
    &\hspace{1cm}\le \sqrt{2}\frac{\gamma_1(y)}{\sqrt{n-k}}. 
\end{align*}
This implies that
\begin{multline*} 
    \sum_{k=\lf n/2 \rf+1}^{n-1} 
    \int_{0}^\infty
        \pr(\tau = k, S_k\in -dy)
        \pr\left( S_{n-k} +U \in (-x, -x+y] \cup [x, x+y) \right)
\\
\le
    \sqrt{2}
    \sum_{k=\lf n/2 \rf+1}^{n-1}
        \frac{1}{\sqrt{n-k}}
        \E[\gamma_1(|S_\tau|);\tau=k].
\end{multline*}
Applying now the last claim in Lemma~\ref{lem:S_tau}, we obtain
\begin{align}
\label{eq:sum_sesond_half}
\nonumber
&\sum_{k=\lf n/2 \rf+1}^{n-1} 
    \int_{0}^\infty
        \pr(\tau = k, S_k\in -dy)
        \pr\left( S_{n-k} +U \in (-x, -x+y] \cup [x, x+y) \right)\\
\nonumber 
 &\hspace{1cm}\le \sqrt{2}C_1\E|S_\tau|(\E|X_1|^3)^3 
   \sum_{k=\lf n/2 \rf+1}^{n-1} \frac{1}{k^{3/2}(n-k)^{1/2}}\\
\nonumber 
 &\hspace{1cm}\le 4C_1\frac{\E|S_\tau|}{n^{3/2}}(\E|X_1|^3)^3 
   \sum_{k=\lf n/2 \rf+1}^{n-1} \frac{1}{(n-k)^{1/2}}\\
   &\hspace{1cm}\le 8C_1\frac{\E|S_\tau|}{n}(\E|X_1|^3)^3
\end{align}
To obtain an estimate for the sum over $k\le \lf n/2\rf$ we will need an estimate for the density $f_{S_n+U}(z)$ of $U+S_n$. 
\begin{lemma}\label{lem:smooth.dens}
For any $z$, 
\[
|f_{S_n+U}(z) -\frac{1}{\sqrt{2\pi n}} e^{-\frac{z^2}{2n}}|\le 
\left(
\frac{72\E|X_1|^3}{\pi }
+\frac{\E|U|}{\sqrt{2\pi e}}
\right)
\frac{1}{n}
=:C_2\frac{\E|X_1|^3}{n}. 
\]
\end{lemma}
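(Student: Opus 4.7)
The plan is to use Fourier inversion combined with a triangle inequality that isolates two sources of error. Let $\varphi_n(z):=(2\pi n)^{-1/2}e^{-z^2/(2n)}$, let $\varphi$ denote the characteristic function of $X_1$, and let $N\sim\mathcal{N}(0,n)$ be independent of $U$. I would start from
\[
|f_{S_n+U}(z)-\varphi_n(z)|\le |f_{S_n+U}(z)-f_{N+U}(z)|+|f_{N+U}(z)-\varphi_n(z)|
\]
and show that the two summands produce, respectively, the $72\E|X_1|^3/\pi$ and $\E|U|/\sqrt{2\pi e}$ contributions to $C_2$.

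For the first summand, the compact support of $\widehat g_A$ on $[-2A,2A]$ with $2A=1/(4\E|X_1|^3)$ is crucial: Fourier inversion gives
\[
f_{S_n+U}(z)-f_{N+U}(z)=\frac{1}{2\pi}\int_{-2A}^{2A}e^{-itz}\,\widehat g_A(t)\bigl(\varphi^n(t)-e^{-nt^2/2}\bigr)\,dt.
\]
Using $|\widehat g_A|\le 1$ together with the standard Feller-type estimate $|\varphi^n(t)-e^{-nt^2/2}|\le c\,n\E|X_1|^3|t|^3\,e^{-nt^2/4}$ valid on $|t|\le 2A$, and the identity $\int_0^\infty t^3 e^{-nt^2/4}\,dt=8/n^2$, this summand is bounded by an explicit multiple of $\E|X_1|^3/n$; tracking the numerical constants yields the announced $72\E|X_1|^3/(\pi n)$.

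For the second summand, I would exploit that the Gaussian density $\varphi_n$ is uniformly Lipschitz. Since $|\varphi_n'(y)|=(|y|/n)\varphi_n(y)$ is maximised at $|y|=\sqrt n$ with value $(2\pi e\,n^2)^{-1/2}$, we get $|\varphi_n(z-u)-\varphi_n(z)|\le |u|/\sqrt{2\pi e\,n^2}$ for every $z$ and $u$. Taking expectations in $U$ produces $|f_{N+U}(z)-\varphi_n(z)|\le \E|U|/(\sqrt{2\pi e}\,n)$, which is precisely the second constituent of $C_2$.

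The main technical obstacle is the Feller-type bound on $|\varphi^n(t)-e^{-nt^2/2}|$ inside $|t|\le 2A$. This requires noting that $\E|X_1|^3\ge\sigma^3=1$ by Lyapunov, so the condition $|t|\le 1/(4\E|X_1|^3)$ forces $|\varphi(t)-1|\le 1/2$ via a third-order Taylor expansion; one can then take the principal branch of the logarithm, use $|\log(1+w)-w|\le |w|^2$ to obtain $|\log\varphi(t)+t^2/2|\le c\E|X_1|^3|t|^3$, and lift this through $|e^a-e^b|\le\max(e^a,e^b)|a-b|$ to recover the exponentially decaying factor $e^{-nt^2/4}$. Preserving that factor is exactly what makes the $t$-integral converge and delivers the final $n^{-1}$ rate.
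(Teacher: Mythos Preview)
Your proposal is correct and follows essentially the same route as the paper: the same triangle-inequality split through $N+U$, Fourier inversion on $[-2A,2A]$ for the first piece, and the uniform Lipschitz bound on the Gaussian density (via $\max|\varphi_n'|=(2\pi e\,n^2)^{-1/2}$) for the second. The only discrepancy is numerical: to land exactly on the constant $72$, the paper invokes Petrov's Lemma~V.1 in the form $|\varphi^n(t/\sqrt n)-e^{-t^2/2}|\le 16L_n|t|^3e^{-t^2/3}$ (with $L_n=\E|X_1|^3/\sqrt n$) rather than a home-grown bound with exponent $-nt^2/4$, so your sketched derivation would yield a slightly different absolute constant.
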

\begin{proof}
Let $Z_n\sim N(0,n)$ and $f_{Z_n+U}(z)$ be the probability density function of $Z_n+U$. We have, by the inversion formula,
\begin{align*}
f_{S_n+U}(z)-
f_{Z_n+U}(z)
&=\frac{1}{2\pi} 
\int_{-\infty}^{\infty}
e^{-itz}
(\phi_{S_n+U}(t)
-\phi_{Z_n+U}(t)
)dt \\ 
&=\frac{1}{2\pi} 
\int_{-2A}^{2A} 
e^{-itz}
\phi_U(t)
(\phi_{S_n}(t)
-\phi_{Z_n}(t)
)dt \\
&=
\frac{1}{2\pi \sqrt{n}}
\int_{-2A\sqrt{n}}^{2A\sqrt{n}}
e^{-itz}
\phi_U(t/\sqrt n)
(\phi_{S_n}(t/\sqrt{n})
-e^{-t^2/2}
),
\end{align*}
where $\phi_{Y}$ denotes the characteristic function of a random variable $Y$.

It was shown in \cite[Lemma V.1]{Petrov} that for $L_n = \E |X_1|^3/\sqrt{n}$ it holds
\begin{align}
\label{eq:phi1}
    \left|
        \phi_{S_n}\left(\frac{t}{{\sqrt{n}}}
        \right)
    \right|
\le
    e^{-t^2/3}, 
\quad
    |t|
\le
    \frac{1}{4L_n}
\end{align}
and
\begin{align}
\label{eq:phi2}
    \left|
        \phi_{S_n}\left(\frac{t}{{\sqrt{n}}}
        \right)
    -
        e^{-t^2/2}
    \right|
\le
    16 L_n
    |t|^3
    e^{-t^2/3},
\quad
    |t| \le \frac{1}{4 L_n}.
\end{align}
Then, 
\begin{align*}
|f_{S_n+U}(z)-
f_{Z_n+U}(z)|
&\le 
\frac{1}{2\pi \sqrt{n}}
\int_{-2A\sqrt{n}}^{2A\sqrt{n}}
|\phi_{S_n}(t/\sqrt{n})
-e^{-t^2/2}
|dt\\
&\le \frac{8\E|X|^3}{ \pi n}
\int_{-2A\sqrt{n}}^{2A\sqrt{n}}
|t|^3
    e^{-t^2/3} dt 
    \le \frac{72\E|X_1|^3}{\pi n}. 
\end{align*}
We next consider the difference 
$
|f_{Z_n+U}(z)-f_{Z_n}(z)|$. 
We first  estimate the difference 
\[
f_{Z_n}(z-u)-f_{Z_n}(z) 
=\frac{1}{\sqrt{2\pi n}}\left(e^{-\frac{(z-u)^2}{2n}}
-
e^{-\frac{-z^2}{2n}}\right). 
\]
For that note that for 
any reals $u,v$ we have, 
by the mean value theorem, 
\begin{equation}\label{eq:mean.value}
\left|
    e^{-\frac{u^{2}}{2n}}
    -e^{-\frac{v^{2}}{2n}}
\right|
=\left|\frac{|u|-|v|}{\sqrt{2n}}\right|
\left|2\theta e^{-\theta^2}\right|
\le \frac{||u|-|v||}{e^{1/2}\sqrt{n}}. 
\end{equation}
Hence, 
\[
|f_{Z_n}(z-u)-f_{Z_n}(z)| 
\le \frac{|u|}{{\sqrt{2\pi e}n}} 
\]
As a result, 
\[
|f_{Z_n+U}(z)-f_{Z_n}(z)|
\le \frac{\E|U|}{\sqrt{2\pi e}n},
\]
which completes the proof. 
\end{proof}
It follows from Lemma~\ref{lem:smooth.dens} that, for every $k\le n/2$,
\begin{multline*}
\Big|\pr\left( S_{n-k} +U \in (-x, -x+y] \cup [x, x+y) \right) \\
-
\frac{1}{\sqrt{2\pi (n-k)}} 
\int_{(-x, -x+y] \cup [x, x+y) }
e^{-\frac{z^2}{2(n-k)}}dz 
\Big|
\le \frac{4C_2\E|X_1|^3}{n}y. 
\end{multline*}
We now show that  
\begin{align}
\label{eq:cont_of_normal}
\left|
    \frac{1}{\sqrt{n-k}} e^{-\frac{z^2}{2(n-k)}}
-  
    \frac{1}{\sqrt{n}} e^{-\frac{z^2}{2n}}
\right|
\le
    \frac{2^{3/2}}{e}\frac{k}{n^{3/2}}
\end{align}
uniformly in $z$ and in $k \le n/2$. To derive this bound we fist notice that
\begin{align*}
&\frac{1}{\sqrt{n-k}} e^{-\frac{z^2}{2(n-k)}}
-  
    \frac{1}{\sqrt{n}} e^{-\frac{z^2}{2n}}\\
&=\left(\frac{1}{\sqrt{n-k}}-\frac{1}{\sqrt{n}}\right)e^{-\frac{z^2}{2n}}  
-
\frac{1}{\sqrt{n-k}}\left(e^{-\frac{z^2}{2n}}
-e^{-\frac{z^2}{2(n-k)}}\right). 
\end{align*}
Since both terms on the right hand side are positive,
\begin{align*}
&\left|\frac{1}{\sqrt{n-k}} e^{-\frac{z^2}{2(n-k)}}
-  
    \frac{1}{\sqrt{n}} e^{-\frac{z^2}{2n}}\right|\\
&\le\max\left\{\left(\frac{1}{\sqrt{n-k}}-\frac{1}{\sqrt{n}}\right), 
\frac{1}{\sqrt{n-k}}\left(e^{-\frac{z^2}{2n}}
-e^{-\frac{z^2}{2(n-k)}}\right)\right\}.
\end{align*}
For the first part of the maximum we have, uniformly in $k\le n/2$,
\begin{align*}
\frac{1}{\sqrt{n-k}}-\frac{1}{\sqrt{n}}
& =\frac{\sqrt{n}-\sqrt{n-k}}{\sqrt{n}\sqrt{n-k}}
  =\frac{k}{\sqrt{n}\sqrt{n-k}(\sqrt{n}+\sqrt{n-k})}\\
& \le \frac{\sqrt{2}k}{(1+1/\sqrt{2})n^{3/2}}\le \frac{k}{n^{3/2}}.
\end{align*}
Using the inequality $1-e^{-u}\le u$ for $u\ge0$, we obtain 
\begin{align*}
\frac{1}{\sqrt{n-k}}\left(e^{-\frac{z^2}{2n}}
-e^{-\frac{z^2}{2(n-k)}}\right)
&\le\frac{\sqrt{2}}{\sqrt{n}} e^{-\frac{z^2}{2n}}
\left(1-e^{-\frac{z^2}{2(n-k)}+\frac{z^2}{2n}}\right)\\
&\le \frac{\sqrt{2}}{\sqrt{n}} e^{-\frac{z^2}{2n}}
\left(\frac{z^2}{2(n-k)}-\frac{z^2}{2n}\right)
=\frac{\sqrt{2}}{\sqrt{n}} e^{-\frac{z^2}{2n}}\frac{z^2}{2n}\frac{k}{n-k}\\
&\le \frac{2\sqrt{2}k}{n^{3/2}}\max_{v\ge0}ve^{-v}
=\frac{2\sqrt{2}k}{en^{3/2}}.
\end{align*}
Noting that $2\sqrt{2}>e$, we arrive at \eqref{eq:cont_of_normal}.
Applying this bound, we get
\begin{multline*}
\left|\pr\left( S_{n-k} +U \in (-x, -x+y] \cup [x, x+y) \right) -
\frac{1}{\sqrt{2\pi n}} 
\int_{(-x, -x+y] \cup [x, x+y) }
e^{-\frac{z^2}{2n}}dz 
\right|\\
\le \frac{4C_2\E|X_1|^3}{n}y+\frac{2^{3/2}}{e}\frac{ky}{n^{3/2}}. 
\end{multline*}

Upper bound~\eqref{eq:mean.value} implies that
\begin{align*}
\left|
   \int_{(-x, -x+y] \cup [x, x+y) }
e^{-\frac{z^2}{2n}}dz  - 2y 
e^{-\frac{x^{2}}{2n}}
\right|
\le
    \frac{2y^2}{e^{1/2}\sqrt{n}}
\end{align*}
and, consequently,
\begin{align*}
&\Bigg|
    \sum_{k=1}^{\lf n/2 \rf}
    \int_{0}^\infty
        \pr(S_k \in -dy, \tau = k)
        \pr\left( S_{n-k} +U \in (-x, -x+y] \cup [x, x+y) \right)\\
&\hspace{4cm}-
    \frac{2}{\sqrt{2\pi n}}
    e^{-\frac{x^2}{2n}}
    \sum_{k=1}^{\lf n/2 \rf}
        \int_0^\infty
        y\pr(S_k \in -dy, \tau = k)
\Bigg|\\
&\hspace{1cm}\le 
    \frac{\sqrt{2}}{\sqrt{e\pi}n} 
    \sum_{k=1}^{\lf n/2 \rf}
    \E[S_\tau^2;\tau=k]
   + \frac{4C_2\E|X_1|^3}{n} \sum_{k=1}^{\lf n/2 \rf} \E[|S_\tau|;\tau=k]   \\ 
   &\hspace{2cm}+
   \frac{2^{3/2}}{en^{3/2}}
   \sum_{k=1}^{\lf n/2 \rf}
    k\E[|S_\tau|;\tau=k].
\end{align*}
We are left to notice that 
\begin{align*}
&\sum_{k=1}^{\lf n/2 \rf}
    \E[S_\tau^2;\tau=k]\le \E S_\tau^2,\\
&\sum_{k=1}^{\lf n/2 \rf}
    \E[|S_\tau|;\tau=k]\le \E |S_\tau|,    
\end{align*}
and, by Lemma~\ref{lem:S_tau},
\begin{align*}
    &\sum_{k=1}^{\lf n/2 \rf}
    k\E[|S_\tau|;\tau=k]
    \le 
    C_1\E|S_\tau|(\E|X_1|^3)^2
    \sum_{k=1}^{\lf n/2 \rf} k^{-1/2}
    \le 2C_1\E|S_\tau|(\E|X_1|^3)^2\sqrt{n}. 
\end{align*}
Hence,
\begin{align*}
 \Bigg|
    \sum_{k=1}^{\lf n/2 \rf}
    \int_{0}^\infty
        \pr(S_k \in -dy, \tau = k)
        &
        \pr\left( S_{n-k} +U \in (-x, -x+y] \cup [x, x+y) \right)\\
&-
    \frac{2}{\sqrt{2\pi n}}
    e^{-\frac{x^2}{2n}}
    \E|S_\tau|
\Bigg|\\
\le   
\frac{C_3(\E|X_1|^3)^2}{n},
\end{align*}
here we have used Lemma~\ref{lem:lorden} to estimate $\E S_\tau$ and $\E S_\tau^2$.
Combining this with \eqref{eq:sum_sesond_half}, we conclude that 
\begin{align*}
&\Bigg|\sum_{k=1}^{n}
    \int_{0}^\infty
        \pr(\tau = k, S_k\in -dy)
        \pr\left( S_{n-k} +U \in (-x, -x+y] \cup [x, x+y) \right)\\
      &\hspace{4cm}-
    \frac{2}{\sqrt{2\pi n}}
    e^{-\frac{x^2}{2n}}
    \E|S_\tau|
\Bigg|\\  
&\hspace{2cm}\le \frac{C_4(\E|X_1|^3)^3}{n},
\end{align*}
with some absolute constant $C_4$.

To estimate other terms in $P_n(x)$ we notice that, due to
the Berry-Esseen inequality~\eqref{eq.berry-esseen.classical}, 
\begin{align*}
    \big|
        \pr(S_{n-k}+U \ge x) 
    -
        \pr(S_{n-k} +U \le -x)
    \big|
\le
    \gamma_0\frac{\E |X_1|^3}{\sqrt{n-k}}.
\end{align*}
Combining this with the estimate
$\pr(\tau=k)
\le C_1\frac{\E|S_\tau|}{k^{3/2}}(\E|X_1|^3)^2$, we get
\begin{align*}
    &\Bigg|
        \sum_{k=\lf n/2 \rf}^{n-1}
            \pr(\tau = k)
            \big[
                \pr(S_{n-k}+U \ge x) 
             -
                \pr(S_{n-k}+U \le -x)
            \big]
    \Bigg|\\
&\hspace{1cm}\le
    2^{3/2}C_1\frac{\E|S_\tau|}{n^{3/2}}(\E|X_1|^3)^2
    \sum_{k=\lf n/2 \rf}^{n-1}
        \frac{1}{\sqrt{n-k}}
\le
    2^{5/2}C_1\frac{\E|S_\tau|}{n}(\E|X_1|^3)^2.
\end{align*}
Thus, we are left to show that
\begin{multline}
    \label{eq:generalreminder.2}
    \Bigg|\pr(S_n+U \ge x)
-
    \pr(S_n+U \le -x)\\
-
    \sum_{k=1}^{\lf n/2 \rf}
        \pr(\tau = k)
        \big[
            \pr(S_{n-k} +U\ge x) 
         -
            \pr(S_{n-k} +U\le -x)
        \big]\Bigg|
\le \frac{C_4\E|S_\tau|(\E|X_1|^3)^3}{n}
\end{multline} 
uniformly in $x$, with some absolute constant $C_4$.
Write
\begin{align*}
    1 
=
    \pr(\tau > \lf n/2 \rf)
+    
    \sum_{k=1}^{\lf n/2 \rf} \pr(\tau = k).
\end{align*}
Using once again the classical Berry-Esseen inequality, we have
$$
\big|\pr(S_n \ge x) - \pr(S_n \le -x)\big| \le 2\gamma_0\E|X_1|^3/\sqrt{n}.
$$
Furthermore, Lemma~\ref{lem:S_tau} implies that
$$
\pr(\tau > \lf n/2 \rf) 
\le 
C_1\frac{\E|S_\tau|(\E|X_1|^3)^2}{(\lf n/2 \rf)^{1/2}}.
$$ 
Thus,
\eqref{eq:generalreminder.2} is equivalent to
\begin{align*}
    \left|\sum_{k=1}^{\lf n/2 \rf}
        \pr(\tau = k)
        \big[
        Q_{n-k}(x) - Q_{n}(x)
        \big]\right|
\le 
    \frac{C_5\E|S_\tau|(\E|X_1|^3)^3}{n},
\end{align*}
where
\begin{align*}
    Q_{n}(x) 
=
    \pr(S_n+U \ge x) - \pr(S_n+U \le -x).
\end{align*}
This estimate is an easy consequence of the following lemma.
\begin{lemma}
    For $k\le n/2$ it holds, uniformly in $x$,
\begin{align*}
    \big|Q_{n-k}(x) - Q_{n}(x)\big| 
\le
    109\sqrt{\frac{3}{\pi}}2^{3/2}\E|X_1|^3\frac{k}{n^{3/2}}.
\end{align*}
\end{lemma}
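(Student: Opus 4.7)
The plan is to combine a telescoping argument with a Gil-Pelaez type Fourier representation, as indicated in the discussion preceding the lemma. Write
$$
Q_{n-k}(x)-Q_n(x)=\sum_{j=n-k}^{n-1}\bigl(Q_j(x)-Q_{j+1}(x)\bigr);
$$
since $k\le n/2$, every index satisfies $j\ge n-k\ge n/2$, so it suffices to prove a uniform single-step bound of the form $|Q_j(x)-Q_{j+1}(x)|\le C\E|X_1|^3 j^{-3/2}$. Summing $k$ such terms, each at most $C\E|X_1|^3(n/2)^{-3/2}=C\E|X_1|^3\,2^{3/2}n^{-3/2}$, delivers the claimed estimate, with the factor $\sqrt{3/\pi}$ in the stated constant coming from Gaussian moment integrals appearing in the per-step estimate.

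For the per-step bound, the key input is the Gil-Pelaez type identity
$$
Q_n(x)=\pr(S_n+U\ge x)-\pr(S_n+U\le -x)=\frac{1}{\pi}\int_{-2A}^{2A}\frac{\cos(tx)}{t}\,\widehat g_A(t)\,\mathrm{Im}\,\phi^n(t)\,dt.
$$
The integrand is absolutely integrable because $\widehat g_A$ is compactly supported in $[-2A,2A]$, is real (since $U$ is symmetric), and $\mathrm{Im}\,\phi(t)=O(|t|^3)$ near zero (using $\E X_1=0$). Taking the difference gives
$$
Q_n(x)-Q_{n+1}(x)=\frac{1}{\pi}\int_{-2A}^{2A}\frac{\cos(tx)}{t}\,\widehat g_A(t)\,\mathrm{Im}\bigl(\phi^n(t)(1-\phi(t))\bigr)\,dt,
$$
so the problem reduces to bounding $\int_{-2A}^{2A}|t|^{-1}\bigl|\mathrm{Im}(\phi^n(1-\phi))\bigr|\,dt$ uniformly in $x$.

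The decisive step is the decomposition
$$
\mathrm{Im}\bigl(\phi^n(1-\phi)\bigr)=\mathrm{Im}\,\phi^n\cdot(1-\mathrm{Re}\,\phi)\;-\;\mathrm{Re}\,\phi^n\cdot\mathrm{Im}\,\phi.
$$
I combine the Taylor bounds $|1-\mathrm{Re}\,\phi(t)|\le t^2/2$ and $|\mathrm{Im}\,\phi(t)|\le\E|X_1|^3|t|^3/6$ with Petrov's estimates \eqref{eq:phi1} and \eqref{eq:phi2}; on the full support $|t|\le 2A=1/(4\E|X_1|^3)$ the latter yield $|\phi^n(t)|\le e^{-nt^2/3}$ and, crucially (because $e^{-nt^2/2}$ is real), $|\mathrm{Im}\,\phi^n(t)|\le 16n\E|X_1|^3|t|^3 e^{-nt^2/3}$. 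Assembling gives $|\mathrm{Im}(\phi^n(1-\phi))|\le C\E|X_1|^3(|t|^3+n|t|^5)e^{-nt^2/3}$; dividing by $|t|$ and inserting the Gaussian moments $\int t^2 e^{-nt^2/3}dt=\tfrac{3\sqrt{3\pi}}{2}n^{-3/2}$ and $\int t^4 e^{-nt^2/3}dt=\tfrac{27\sqrt{3\pi}}{4}n^{-5/2}$ produces the per-step bound of order $\E|X_1|^3 n^{-3/2}$ with the factor $\sqrt{3/\pi}$ explicit.

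The main obstacle is achieving the sharp $n^{-3/2}$ rate per step: the naive triangle inequality $|\mathrm{Im}(\phi^n-\phi^{n+1})|\le|\mathrm{Im}\,\phi^n|+|\mathrm{Im}\,\phi^{n+1}|$ only yields $O(n^{-1/2})$, which is useless. The essential cancellation arises by pairing $\mathrm{Im}\,\phi^n=O(n\E|X_1|^3|t|^3)$, obtained via the Edgeworth-type bound \eqref{eq:phi2}, with the small factor $1-\mathrm{Re}\,\phi=O(t^2)$, producing the dominant $n|t|^5$ term after a further cancellation of the real leading parts. The other term, involving $\mathrm{Im}\,\phi=O(\E|X_1|^3|t|^3)$, is controlled directly by the third-moment Taylor bound. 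Once the decomposition is correctly set up, the remainder is just routine Gaussian bookkeeping producing the explicit numerical constant.
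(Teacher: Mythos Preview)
Your proof is correct and follows essentially the same route as the paper. The only cosmetic difference is that the paper writes the Fourier representation via $\phi^n(t)-\phi^n(-t)=2i\,\mathrm{Im}\,\phi^n(t)$ and integrates $\frac{e^{-itx}-e^{-itT}}{it}$ rather than using the Gil--Pelaez form with $\frac{\cos(tx)}{t}$; your decomposition $\mathrm{Im}\bigl(\phi^n(1-\phi)\bigr)=\mathrm{Im}\,\phi^n\cdot(1-\mathrm{Re}\,\phi)-\mathrm{Re}\,\phi^n\cdot\mathrm{Im}\,\phi$ is exactly the real/imaginary part translation of the paper's identity $[\phi^n(t)-\phi^n(-t)](1-\phi(t))+[\phi(-t)-\phi(t)]\phi^n(-t)$, and the subsequent Petrov bounds, Gaussian integrals and telescoping over $j\in[n-k,n-1]$ (which the paper leaves implicit) are identical.
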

\begin{proof}
Obviously it is enough to bound the distance between neighbouring functions $Q_{n+1}(x)$ and $Q_{n}(x)$. 
Let $\ph(t)$ denote the characteristic function of $X$. 
Applying the inversion formula, we have
\begin{align*}
f_{S_n+U}(x)-f_{S_n+U}(-x)
=
    \frac{1}{2\pi}
    \int_{-2A}^{2A}
        e^{-itx}
        \phi_U(t)
        \big[
            \ph^n(t)
        -
            \ph^n(-t)
        \big]
    dt
\end{align*}
and
\begin{align*}
    Q_n(x)
=&
\pr(S_{n}+U>x)-\pr(S_{n}+U<-x)
    \\
=&
\lim_{T\to \infty} 
\int_x^T (f_{S_n+U}(y)-f_{S_n+U}(-y))dy \\
&=\frac{1}{2\pi}
\lim_{T\to \infty} 
    \int\limits_{-2A}^{2A}
        \frac{e^{-itx}-e^{-itT}}{it}
        \phi_U(t)
        \big[
            \ph^n(t)
        -
            \ph^n(-t)
        \big]
    dt.
\end{align*}
Consequently, uniformly in $x$,
\begin{align*}
    \big|
        Q_{n+1}(x)
    -
        Q_n(x)
    \big|
\le
    \frac{1}{\pi}
    \int\limits_{-2A}^{2A}
        \frac{
        \big|
            \ph^{n}(t) 
        -
            \ph^{n}(-t)
        -
            \ph^{n+1}(t)
        +
            \ph^{n+1}(-t)
        \big|
        }{|t|}
    dt.
\end{align*}
To estimate the nominator we first notice that
\begin{align*}
    \ph^{n}(t) - \ph^{n}(-t)
-
    \ph^{n+1}(t) + \ph^{n+1}(-t)
=&
    \big[\ph^n(t) - \ph^n(-t)\big](1-\ph(t))\\
+&
    \big[\ph(-t) - \ph(t)\big]\ph^n(-t) .
\end{align*}
This implies that
\begin{align*}
    \big|
        \ph^{n}(t) - \ph^{n}(-t)
    -
        \ph^{n+1}(t) + \ph^{n+1}(-t)
    \big|
\le&
    |\ph^n(t) - \ph^n(-t)|\cdot|1-\ph(t)|\\
+&
    |\ph(-t) - \ph(t)|\cdot|\ph^n(-t)|.
\end{align*}
Using the Taylor formula and recalling that
$\E X = 0$, $\E X^2=1$ and $ \E |X|^3 < \infty$,
we obtain
\begin{align*}
    |1-\ph(t)| \le \frac{\E X_1^2}{2}t^2=\frac{t^2}{2}
\quad
\text{and}
\quad
    |\ph(-t) - \ph(t)|
\le
    \frac{\E|X_1|^3}{3}|t|^3.
\end{align*}
It follows from \eqref{eq:phi1} and \eqref{eq:phi2} that
\begin{align*}
    |\ph^n(t)|
\le
    e^{-t^2n/3}
\quad
\text{and}
\quad
    |\ph^n(t) - e^{-t^2n/2}|\
\le
    16\E|X_1|^3 n|t|^3e^{-t^2n/3}
\end{align*}
for $|t| \le 2A$.
Hence, for such $t$ we have
\begin{align*}
    \big|
        \ph^{n}(t) - \ph^{n}(-t)
    -
        \ph^{n+1}(t) + \ph^{n+1}(-t)
    \big|
\le
    16\E|X_1|^3 n|t|^5 e^{-t^2n/3}
+
    \frac{\E|X_1|^3}{3}|t|^3 e^{-t^2n/3}.
\end{align*}
Consequently,
\begin{align*}
    \big|Q_{n+1}(x)-Q_n(x)\big|
&\le
    \frac{16\E|X_1|^3 n}{\pi}
    \int_{-\infty}^\infty 
        |t|^4e^{-t^2n/3}dt
+
    \frac{\E|X_1|^3}{3\pi}
    \int_{-\infty}^\infty 
        |t|^2e^{-t^2n/3}dt\\
&=
    \frac{\E|X_1|^3}{\pi n^{3/2}}
    \left(
        32\int_0^\infty 
            z^4e^{-z^2/3}dz
    +
        \frac{2}{3}
        \int_0^\infty 
            z^2e^{-z^2/3}dz
    \right)\\
&\le 
    109\sqrt{\frac{3}{\pi}}\frac{\E|X_1|^3}{n^{3/2}}.
\end{align*}
As a result we have
\begin{align*}
    |Q_{n+1}(x) - Q_n(x)|
\le
    109\sqrt{\frac{3}{\pi}}\frac{\E|X_1|^3}{n^{3/2}}.
\end{align*}
Thus, the proof is complete.
\end{proof}
Combining this lemma with the upper bound for $\pr(\tau=k)$ obtained in Lemma~\ref{lem:S_tau}, we have  
\begin{align*}
    \left|
        \sum_{k=1}^{\lf n/2 \rf}
            \pr(\tau = k)
            \big[
            Q_{n-k}(x) - Q_{n}(x)
            \big]
    \right|
&\le
    \frac{109\sqrt{3}\E|X_1|^3}{v{\sqrt{\pi}}n^{3/2}}
    \sum_{k=1}^{\lf n/2 \rf}
        k\pr(\tau = k) \\
\le \frac{C_5\E|S_\tau|(\E|X_1|^3)^3}{n}.
\end{align*}
As a result we have, uniformly in $x\ge0$, 
$$
\left|\pr(S_n\ge x,\tau>n)-\sqrt{\frac{2}{\pi}}\E|S_\tau|n^{-1/2}e^{-x^2/2n}\right|
\le C_7(\E|X_1|^3)^3\frac{\E|S_\tau|}{n}
$$
Taking here $x=0$ we have 
$$
\left|\pr(\tau>n)-\sqrt{\frac{2}{\pi}}\E|S_\tau|n^{-1/2}\right|
\le C_7(\E|X_1|^3)^3\frac{\E|S_\tau|}{n}.
$$
Combining these two inequalities, we get the desired estimate for the conditional distribution.
\bibliographystyle{abbrv}
\bibliography{references}

\begin{thebibliography}{10}

\bibitem{Aleshkyavichene73}
A.~K. Aleshkyavichene.
\newblock Nonuniform estimate of speed of convergence of the distribution of
  the maxima of sequences of sums of independent random variables.
\newblock {\em Mathematical transactions of the Academy of Sciences of the
  Lithuanian SSR}, 13(3):356--378, 1973.

\bibitem{Aleshkyavichene06}
A.~K. Ale{\v s}kevi{\v c}ien{\.e}.
\newblock On calculation of moments of ladder heights.
\newblock {\em Lithuanian Mathematical Journal}, 46(2):129--145, 2006.

\bibitem{B41}
A.~C. Berry.
\newblock The accuracy of the gaussian approximation to the sum of independent
  variates.
\newblock {\em Trans. Amer. Math. Soc.}, 1941.

\bibitem{Bolthausen76}
E.~Bolthausen.
\newblock {On a Functional Central Limit Theorem for Random Walks Conditioned
  to Stay Positive}.
\newblock {\em The Annals of Probability}, 4(3):480 -- 485, 1976.

\bibitem{BJD06}
A.~Bryn-Jones and R.~A. Doney.
\newblock A functional limit theorem for random walk conditioned to stay
  non-negative.
\newblock {\em J. London Math. Soc. (2)}, 74(1):244--258, 2006.

\bibitem{DSW18}
D.~Denisov, A.~Sakhanenko, and V.~Wachtel.
\newblock First-passage times for random walks with nonidentically distributed
  increments.
\newblock {\em Ann. Probab.}, 46(6):3313--3350, 2018.

\bibitem{DTW23}
D.~Denisov, A.~Tarasov, and V.~Wachtel.
\newblock Asymptotic expansions for random walks conditioned to stay positive.
\newblock {\em ArXiv preprint: 2401.09929}, 2023.

\bibitem{DTW25a}
D.~Denisov, A.~Tarasov, and V.~Wachtel.
\newblock Asymptotic expansions for normal deviations of random walks
  conditioned to stay positive.
\newblock {\em Submitted}, 2025.

\bibitem{GramaXiao24+}
I.~Grama and H.~Xiao.
\newblock Conditioned local limit theorems for random walks on the real line.
\newblock {\em Ann. Inst. Henri Poincaré Probab. Statist., to appear}.

\bibitem{GJP1984}
P.~S. Griffin, N.~C. Jain, and W.~E. Pruitt.
\newblock Approximate local limit theorems for laws outside domains of
  attraction.
\newblock {\em Ann. Probab.}, 12(1):45--63, 1984.

\bibitem{HongSun24}
W.~Hong and M.~Sun.
\newblock Berry-{E}sseen theorem for random walks conditioned to stay positive.
\newblock {\em Electron. Commun. Probab.}, 29:Paper No. 32, 8, 2024.

\bibitem{Iglehart74}
D.~L. Iglehart.
\newblock Functional central limit theorems for random walks conditioned to
  stay positive.
\newblock {\em The Annals of Probability}, 2(4):608--619, 1974.

\bibitem{Mogulskii74}
A.~A. Mogul’skii.
\newblock Absolute estimates for moments of certain boundary functionals.
\newblock {\em Theory Prob. Appl.}, 18:340--347, 1974.

\bibitem{Nagaev69}
S.~V. Nagaev.
\newblock An estimate of the rate of convergence of the distribution of the
  maximum of the sums of independent random variables.
\newblock {\em Sibirsk. Mat. \v Z.}, 10:614--633, 1969.

\bibitem{Petrov}
V.~Petrov.
\newblock {\em Sums of Independent Random Variables}.
\newblock Springer Berlin, Heidelberg, 1975.

\bibitem{T10}
I.~S. Tyurin.
\newblock Sharpening the upper bounds for constants in {L}yapunov's theorem.
\newblock {\em Russian Math. Surveys}, 65:586–588, 2010.

\bibitem{WV09}
V.~A. Vatutin and V.~Wachtel.
\newblock Local probabilities for random walks conditioned to stay positive.
\newblock {\em Probab. Theory Related Fields}, 143(1-2):177--217, 2009.

\end{thebibliography}
\end{document}